\journal{Journal of \LaTeX\ Templates}
\newtheorem{theorem}{Theorem}
\newtheorem{lemma}{Lemma}
\newtheorem{remark}{Remark}
\newtheorem{definition}{Definition}
\newtheorem{proposition}{Proposition}
\newtheorem{corollary}{Corollary}
\newtheorem{example}{Example}
\begin{document}
\begin{frontmatter}
\title{Realization of the transient dynamics of dimension-varying control systems}
\tnotetext[cor1]{This work was supported by the Natural Science Basic Research Program of Shaanxi Province under Grant no. 2021JZ-12.}
\author[1]{Naqi Fan}

\author[1,2]{Lijun Zhang\corref{cor2}}
\ead{zhanglj7385@nwpu.edu.cn}
\cortext[cor2]{Corresponding author at: School of Marine Technology, Northwestern Polytechnical University, Xi'an {\rm710072}, China.}
\address[1]{College of Intelligent Systems Science and Engineering, Harbin Engineering University, Harbin, 150001, China.}
\address[2]{School of Marine Technology, Northwestern Polytechnical University, Xi'an, 710129, China.}
\begin{abstract}
Dynamic evolution behaviors of dimension-varying control systems often appear in the genetic regulatory network and the vehicle clutch system etc. An interesting and significant study on dimension-varying control systems is how to realize the dimension-varying (the transient dynamics) process smoothly between the different dimensional subsystems. The quotient space approach is considered as an effective tool to model the transient dynamics of dimension-varying control systems.
This paper investigates the realization problem of the transient dynamics of dimension-varying control systems. By revealing the structure of the controllable subspace for the linear system on quotient space, we propose the condition for the realization of the transient dynamics of dimension-varying control systems, based on which a new scheme for modeling the transient dynamics is given. Moreover, the proposed scheme justifies the existing result for the strategy for modelling the transient dynamics. A numeric example is given to illustrate our theoretical results.
\end{abstract}
\begin{keyword}
Dimension-varying control system, transient dynamics,  controllability, quotient space, realization.
\end{keyword}

\end{frontmatter}
\section{Introduction}
Dimension-varying control systems, also known as cross-dimensional control systems, are used for modeling complex systems with state spaces of different dimensions.
In practice, many dynamic behaviours can be modeled by a dimension-varying control system.
For example, in the internet or some other service-based networks, some users may join or withdraw every now and then \cite{2005Evolution}.
In a genetic regulatory network, cells may die or birth at any time \cite{2010Interactome,1981Application}.
Docking, undocking, departure and joining of spacecrafts  have ``short periods" of dimension-varying process \cite{Yang2013Spacecraft}.
During the dimension-varying process, the dimension-varying control system evolves from one model to another model of different dimensions.

In the past, researchers did some work on dimension-varying control systems, where they usually treated dimension-varying control systems as switching systems and investigated the stability and control design problem based on the switching system theory\cite{Hao2014Stabilization,Pakniyat2017On,Caines2017Hybrid,Yang2013Spacecraft}. However,
this scheme ignores the transient dynamics of the system during the dimension-varying process. In fact, this transient period may be long enough so that the dynamics during this process can not be ignored \cite{Yang2013Spacecraft,Jiao2014Modeling}.
For example, in automotive engineering, the vehicle clutch system shown as Figure \ref{clutch} can be described as 2-dimensional system or 1-dimensional system depending on whether the clutch is ``disengaged" or ``engaged" \cite{Temporelli2018Accurate,Cheng2020Equivalence}.
It takes about 1 second to complete the transient process of ``disengaged" or ``engaged".
The processes of docking, undocking, departure and joining in spacecrafts formation take even longer \cite{Pakniyat2017On,Caines2017Hybrid,Yang2013Spacecraft}. Therefore, it is significant to study the transient dynamics of dimension-varying control systems. Modeling and control design for the transient dynamics of dimension-varying control systems can improve the performance of mechanical or other systems.

However, for decades, mathematically few researches on the transient dynamics of dimension-varying control systems appear due to the lack of proper theoretical tool. Fortunately, Prof. Cheng \cite{Cheng2019From} in 2019 developed the theory of dimension-varying control systems, where he investigated the underlying mathematics of dimension-varying control systems and deliver a masterful exposition of dimension-varying control systems to model and analyse the dynamics of dimension-varying process of linear systems. Typically, based on quotient space theory, Cheng in \cite{Cheng2020Equivalence} proposed a projecting system of dimension-varying control systems on a developed quotient space which consists of the equivalent class of the state of different dimensional systems. In consequence, the study on the transient dynamics of dimension-varying control systems is treated as the investigation on the corresponding projecting systems on quotient space.

\begin{figure}[H]
  \centering
  \includegraphics[width=6cm]{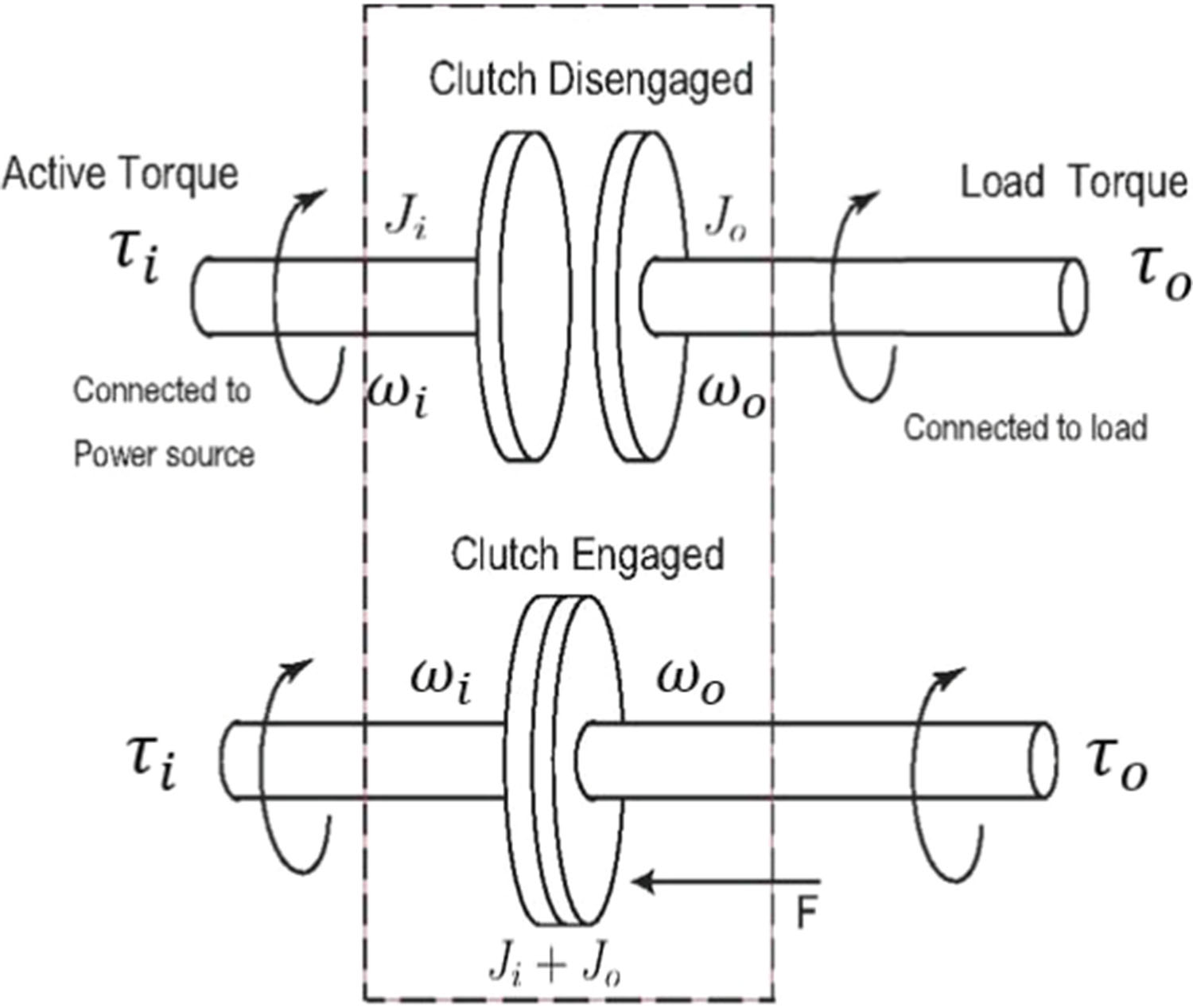}\\
  \caption{\small The vehicle clutch system. }\label{clutch}
\end{figure}

Specifically, for the given dimension-varying control system consisting of two linear subsystems with various dimensions:
\begin{equation}\label{system}
\begin{aligned}
&\Sigma_{1}: \dot{x}(t)=A_{1} x(t)+B_{1} u_{1}(t), \quad x \in \mathbb{R}^{p},\\
&\Sigma_{2}: \dot{y}(t)=A_{2} y(t)+B_{2} u_{2}(t), \quad y \in \mathbb{R}^{q},
 \end{aligned}
\end{equation}
where $x(t)$ and $y(t)$ are states of subsystems $\Sigma_{1}$ and $\Sigma_{2}$ respectively, $A_{1}\in \mathbb{R}^{p\times p}$, $A_{2}\in \mathbb{R}^{q\times q}$, $B_{1}\in \mathbb{R}^{p\times r}$ and $B_{2}\in \mathbb{R}^{q\times s}$ are  constant matrices that characterise the subsystems, $u_{i}(t)$ $(i=1, 2)$ stands for the controlled input of subsystem $\Sigma_{i}$. During the dimension-varying process, system \eqref{system} will evolve from $\Sigma_{1}$ to  $\Sigma_{2}$ (or $\Sigma_{2}$ to  $\Sigma_{1}$).
For system \eqref{system}, a well-known challenge is that different dimensions of $\Sigma_1$ and $\Sigma_2$ lead to an intractable modeling of the transient dynamics. In order to overcome this challenge, Cheng \cite{Cheng2018Linear, Cheng2019From} first proposed that the equivalence of linear systems on spaces of different dimensions and
constructed a quotient space based on the equivalence, and then modeled the transient dynamics of dimension-varying control systems by projecting the dimension-varying control system onto quotient space. As a result, the transient dynamics of system \eqref{system} can be realized  if one can design
$u_1(t)$ and $u_2(t)$ such that $\Sigma_{1}$ ($\Sigma_{2}$) can evolve to  $\Sigma_{2}$ ($\Sigma_{1}$) during  the transient period. Thereby, in theory, the transient process of dimension-varying control systems can be realized as smoothly as possible via the control design.

Recently the theory of dimension-varying control systems proposed by Cheng \cite{Cheng2019From} has attracted considerable attention. Zhang et al. \cite{Zhang2019Long} given a set of basis for the cross-dimensional state space of cross-dimensional systems to show it is of countably infinite dimension.
Feng et al. \cite{Feng2021On} considered the variation of a class of cross-dimensional linear systems and established an algorithm for calculating the state dimensions after a transition time.
Other new published research on dimension-varying control systems, please refer to \cite{Zhao2021On,Zhang2021Solution,Liu2021Relations,Zhang2021On}. However, to the best of our knowledge, other than Cheng's works \cite{Cheng2018Linear, Cheng2019From,Cheng2020Equivalence}, the above mentioned works have never referred to the realization problem
of the transient dynamics of dimension-varying control systems. What is worth saying, Cheng \cite{Cheng2020Equivalence} applied his theory to modeling the transient dynamics of vehicle clutch systems (Figure \ref{clutch}), based on which he had designed the control for dimension-varying process of clutch systems.
The control implementation for the transient dynamics of vehicle clutch systems in \cite{Cheng2020Equivalence} urges us a query: ~What the condition for ``the control implementation" is.

\emph{Cheng's works\cite{Cheng2018Linear, Cheng2019From,Cheng2020Equivalence} provide us a promising direction to investigate the transient dynamics of the dimension-varying control system, in which a fundamental theoretical framework has been built. However, as for the realization of the transient dynamics of dimension-varying control systems, several intriguing and challenging issues arise:
\begin{enumerate}
  \item Whether the realization problem of system \eqref{system} is equivalent to that of its projecting system?
  \item What conditions need to be satisfied for system \eqref{system} to realize transient process?
  \item What conditions should be satisfied to model the transient dynamics of dimension-varying control systems?
\end{enumerate}}

Motivated by the above issues, we proceed to study the realization problem of transient dynamics of dimension-varying control systems in our paper.
Specifically, from the viewpoint of the controllability theory of the system, this paper aims at providing a theoretical basis for modeling the transient dynamics of dimension-varying control systems on quotient space.
By analyzing the controllable subspace of the linear system on quotient space, we first show  that the realization of dimension-varying control systems can be equivalent to that of its projecting system on quotient space.
Specifically, we present the coordinate transformation of linear system on quotient space for controllability decomposition, in which we show that the relation between the controllability of dimension-varying control systems and that of its projecting systems.
Furthermore, we present a condition for the realization of the transient dynamics of dimension-varying control systems based on the controllability of the projecting system.
In particular, based on the proposed condition, a new scheme for modeling the transient dynamics of dimension-varying control systems is developed, based on which the realization of the transient dynamics of vehicle clutch systems in \cite{Cheng2020Equivalence} can be justified.

The structure of this paper is organized as follows:
In section \ref{S2}, we give some preliminaries on quotient space and formulate problems.
Section \ref{S3} analyzes the controllability of linear systems on quotient space and gives the coordinate transformation for controllability decomposition. The condition for the realization of the transient dynamics of dimension-varying control systems is presented in Section \ref{S4}. Section \ref{S5} presents the condition for modeling the transient dynamics of dimension-varying control systems, where we also recall an example in \cite{Cheng2019From} to illustrate the effectiveness of our proposed the theoretical results. The conclusion is drawn in Section \ref{S6}.

\section{Preliminaries and Problem Description}\label{S2}
In this section, we sketch fundamental definitions and concepts of quotient space, and introduce linear systems on quotient space.
For more details, the reader is suggested to refer to \cite{Cheng2019On}. At the end of this section, the considered problems about the realization of the transient dynamics of dimension-varying control systems are put forth.

Some notations are given first.
Symbol $\otimes$ is Kronecker product of matrices.
One-entry vector is $\mathbf{1}_{n}=[\underbrace{1, \cdots, 1}_{n}]^{T}$.
Denote the $k\times k$  matrix  with its entries being $\frac{1}{k}$ by $\mathbf{J}_{k}$.
The set of mix-dimensional vectors is defined as $\mathcal{V}:=\bigcup_{n=1}^{\infty} \mathcal{V}_{n}$, where  $\mathcal{V}_{n}$ is an $n$-dimensional vector space. For brevity, we assume $\mathcal{V}_{n}=\mathbb{R}^{n}$.
The set of all matrices is defined as $\mathcal{M}:=\bigcup_{m=1}^{\infty} \bigcup_{n=1}^{\infty} \mathcal{M}_{m\times n}$, where $\mathcal{M}_{m\times n}$ represents the set of $m \times n$
dimensional real matrices.

\subsection{Preliminaries}
\begin{definition}\cite{Cheng2019On}\label{vector-equiv}
1) Let $x, y\in \mathcal{V}$. $x$ and $y$ are said to be equivalent, denoted by $x\leftrightarrow y$, if there exist $\mathbf{1}_{\alpha}$ and $\mathbf{1}_{\beta}$, such
that
$x \otimes \mathbf{1}_{\alpha}=y \otimes \mathbf{1}_{\beta}.$

\noindent 2) The equivalent class is denoted by
$$\bar{x}=\{y \in \mathcal{V} \mid y \leftrightarrow x\}.$$

\noindent 3) A vector $x_{1} \in \bar{x}$ is irreducible, if there are no $y$ and $\mathbf{1}_{s}$, $s > 1$, such that $x_{1} = y \otimes \mathbf{1}_{s}$.
Then, all the elements in $\bar{x}$ can be expressed as $x_{i}=x_{1}\otimes\mathbf{1}_{i}, i=1, 2, 3, \ldots$, and  $x_{1}$ is an unique irreducible element.

\noindent 4) The quotient vector space of $\mathcal{V}$ under equivalence relation $\leftrightarrow$, denoted by $\Omega$, is
$$
\Omega:=\{\bar{x} \mid x \in \mathcal{V}\} .
$$
\end{definition}

\begin{definition}\cite{Cheng2019On}\label{def1}
Let $x\in \mathcal{V}_m$, $y\in \mathcal{V}_n$ and $t = m\vee n$ be the least common
multiple of $m$ and $n$.  An addition of
$x$ and $y$ is defined as $x \vec{|+} y:=\left(x \otimes \mathbf{1}_{t / m}\right)+\left(y \otimes \mathbf{1}_{t / n}\right) \in \mathcal{V}_t$.
Correspondingly, the subtraction is defined as $x \vec{\vdash} y:=x \vec{|+} (-y).$
Then, for $\bar{x},\bar{y}\in \Omega$ and $a\in \mathbb{R}$,
$$
\begin{aligned}
\bar{x} \vec{|+} \bar{y} &:=\overline{x \vec{|+} y}, \\
\bar{x} \vec{\vdash} &:=\overline{x \vec{\vdash} y}, \\
a \bar{x} &:=\overline{a x}.
\end{aligned}
$$
\end{definition}

Definition \ref{def1} shows the quotient space $\Omega$ is a vector space.

\begin{definition}\cite{Cheng2019On}\label{D2}
Let $A, B\in \mathcal{M}$. $A$ and $B$ are said to be equivalent, denoted by $A\thickapprox B$, if there exist $\mathbf{J}_{\alpha}$ and $\mathbf{J}_{\beta}$, such
that
$
A \otimes \mathbf{J}_{\alpha}=B \otimes \mathbf{J}_{\beta}.
$
The equivalent class is denoted by
$
\hat{A}=\{B \mid B \approx A\}.
$
Then the quotient space is denoted by
$
\Xi:=\mathcal{M} / \approx.
$
\end{definition}

\begin{definition}\cite{Cheng2019On}\label{secondSTP}
1)Let $A \in \mathcal{M}_{m \times n}, B \in \mathcal{M}_{p \times q}$, and $t = n\vee p$. Then the second semi-tensor product of $A$ and $B$, denoted
by $A \circ B$, is defined as
$$
A \circ B:=\left(A \otimes J_{t / n}\right)\left(B \otimes J_{t / p}\right).
$$
\noindent 2)Define a product on $\Xi$ as
$
\hat{A} \circ \hat{B}:=\widehat{A \circ B}.
$
\end{definition}

The following definition presents a vector equivalence for two matrices, which is used to describe linear systems on quotient space.
\begin{definition}\cite{Cheng2019On}\label{defi-mvec-equi}
Let $B, D\in \mathcal{M}$. $B$ and $D$ are said to be vector equivalent, denoted by $B\leftrightarrow D$, if there exist $\mathbf{1}_{\alpha}$ and $\mathbf{1}_{\beta}$, such that
$
B \otimes \mathbf{1}_{\alpha}=D \otimes \mathbf{1}_{\beta}.
$
The equivalent class of $B$ is denoted by
$
\bar{B}=\{D \mid D \leftrightarrow B\}.
$
\end{definition}
\begin{definition}\cite{Cheng2019On}
1) Let $A \in \mathcal{M}_{m \times n} \subset \mathcal{M}$, $x \in \mathcal{V}_{r} \subset \mathcal{V}$ and $t = n\vee r$. Then the product of $A$ and $x$, denoted by $\vec{\circ}$, is defined as
\begin{equation}
A \vec{\circ} x:=\left(A \otimes \mathbf{J}_{t / n}\right)\left(x \otimes \mathbf{1}_{t / r}\right).
\end{equation}
\noindent 2) The action of $\Xi$ on $\Omega$ is denoted as $\hat{A} \vec{\circ} \bar{x}:=\overline{A \vec{\circ} x}$.
\end{definition}

Viewing a matrix $B$ as a set of column vectors, the above definition can be easily extended to the case of two matrices $A$ and $B$:
\begin{definition}\label{ABcolum}
1) Let $A \in \mathcal{M}_{m \times n}$, $B \in \mathcal{M}_{r \times s}$ and $t = n\vee r$. Then
\begin{equation}
A \vec{\circ} B:=\left(A \otimes \mathbf{J}_{t / n}\right)\left(B \otimes \mathbf{1}_{t / r}\right).
\end{equation}
\noindent 2) The action on $\Xi$ is denoted as $\hat{A} \vec{\circ} \bar{B}:=\overline{A \vec{\circ} B}$.
\end{definition}

By Definition 2,  Cheng \cite{Cheng2020Equivalence} has defined the projecting system of the linear system on quotient space as follows.
\begin{definition}\cite{Cheng2020Equivalence}\label{D1}
\noindent 1) Consider a linear system
\begin{equation}\label{lifting}
\begin{aligned}
&\dot{x}=A x(t)+B u(t), \quad x(t) \in \mathbb{R}^{r}.
\end{aligned}
\end{equation}
The following system on quotient space $\Omega$ is
called the projecting system of (\ref{lifting})
\begin{equation}\label{projecting}
\begin{aligned}
&\dot{\bar{x}}(t)=\hat{A} \vec{\circ} \bar{x}(t)+\bar{B} u(t), \quad \bar{x}(t) \in \Omega.
\end{aligned}
\end{equation}
In turn, system \eqref{lifting} is called
the lifting system of \eqref{projecting} on $\mathbb{R}^{r}$, if $A \in \hat{A}$ and $B \in \bar{B}$.

\noindent 2)
Let $\Theta_{0}$ be a linear control system on quotient
space and $\Theta_{n}$ be its lifting system on $\mathbb{R}^{n}$. Then all the lifting systems
are said to be equivalent.

\end{definition}
\subsection{Problem Description}
Note that a system on quotient space is a set of equivalent systems of different dimensions, hence the systems of different dimensions can be ``lifted" up to a space of the identical dimension.
This makes it possible to model the transient dynamics of dimension-varying control systems on quotient space.
Therefore, by Definition \ref{D1}, the projection of system \eqref{system} on quotient space can be described as follows.
\begin{equation}\label{qsystem}
\begin{aligned}
\bar{\Sigma}_{1}: \dot{\bar{x}}(t)=\hat{A}_{1}\vec{\circ} \bar{x}(t)+\bar{B}_{1} u_{1}(t), \quad \bar{x} \in \bar{\mathbb{R}}^p,\\
\bar{\Sigma}_{2}: \dot{\bar{y}}(t)=\hat{A}_{2} \vec{\circ} \bar{y}(t)+\bar{B}_{2} u_{2}(t), \quad \bar{y} \in \bar{\mathbb{R}}^q,
 \end{aligned}
\end{equation}
where $\bar{\mathbb{R}}^p=\{\bar x|x\in {\mathbb{R}}^p\}$ and $\bar{\mathbb{R}}^q=\{\bar y|y\in {\mathbb{R}}^q\}$. For simplify, hereafter we use  the notation $\Omega$ to represent $\bar{\mathbb{R}}^p$ or $\bar{\mathbb{R}}^q$.
Based on system \eqref{qsystem}, we can convert the transient dynamics from $\Sigma_{1}$ to $\Sigma_{2}$ of system \eqref{system} into that of $\bar{\Sigma}_{1}$ to  $\bar{\Sigma}_{2}$ on quotient space during the transient period  $[t_0, t_e]$.
\begin{definition}\cite{Cheng2020Equivalence}\label{defin-realization-cite}
For system \eqref{system}, the transient dynamics is said to be properly realized from a given starting state ${x}(t_{0})\in \mathbb{R}^{p} $ to ${y}(t_{e})\in \mathbb{R}^{q}$ during the transient period $[t_0, t_e]$, if, for system \eqref{qsystem}, there are controls $u_{1}(t)$ and $u_{2}(t)$ such that a given starting state $\bar{x}(t_{0}) $ can be controlled to the state  $\bar{y}(t_{e}) $ on the quotient space $\Omega$.
\end{definition}
Here several intriguing and challenging problems arise:
\begin{enumerate}
  \item Whether the realization problem of system \eqref{system} is equivalent to that of system \eqref{qsystem}?
  \item What conditions need to be satisfied for system \eqref{system} to realize transient process?
  \item What conditions should be satisfied to model the transient dynamics of dimension-varying control systems?
\end{enumerate}
Now, we will discuss each of these problems in details.

\section{Some Analyses for Linear Systems on Quotient Space}\label{S3}
In this section, our purpose is to illustrate the realization problem of system \eqref{system} is equivalent to that of system \eqref{qsystem} by studying the controllable subspace of the projecting system \eqref{projecting} on quotient space.
Furthermore, we present the coordinate transformation of the projecting system which will support our sequent research.

\subsection{The controllable subspace of linear system on $\Omega$}

\begin{lemma}\label{L1}
Let $\mathcal{C}=\operatorname{span}\left\{B, A B, \cdots, A^{r-1} B\right\}
$ be the controllable subspace of system \eqref{lifting}, the controllable subspace of the projecting system \eqref{projecting},
denoted by $\bar{\mathcal{C}}$, is
$$\overline{\mathcal{C}}=\operatorname{span}\{
  \bar{B}, \overline{AB}, \overline{A^{2} B}, \cdots, \overline{A^{r-1} B}  \}$$
\end{lemma}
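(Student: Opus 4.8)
The plan is to show the two set inclusions $\overline{\mathcal{C}} \subseteq \operatorname{span}\{\bar{B}, \overline{AB}, \dots, \overline{A^{r-1}B}\}$ and $\operatorname{span}\{\bar{B}, \overline{AB}, \dots, \overline{A^{r-1}B}\} \subseteq \overline{\mathcal{C}}$, working from the definition of the controllable subspace of the projecting system \eqref{projecting}. The controllable subspace $\overline{\mathcal{C}}$ should be the image, under the canonical projection $\pi: \mathcal{V} \to \Omega$ (sending $x \mapsto \bar{x}$), of the reachable set of trajectories of \eqref{projecting}; equivalently, since $\hat{A} \vec{\circ} \bar{x} = \overline{A \vec{\circ} x}$ and $\bar{B}u = \overline{Bu}$ by the definitions preceding the lemma, every trajectory of the projecting system is the image $\pi(\xi(t))$ of a trajectory $\xi(t)$ of a lifting system. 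So the first step is to make precise that $\overline{\mathcal{C}} = \pi(\mathcal{C})$, where $\mathcal{C} = \operatorname{span}\{B, AB, \dots, A^{r-1}B\}$ is the ordinary controllable subspace of \eqref{lifting}.

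Next I would use the fact, immediate from Definition~\ref{def1} and the definition of $a\bar{x}$, that $\pi$ is linear: $\pi(\alpha x + \beta y) = \alpha\bar{x} \vec{|+} \beta \bar{y}$ in $\Omega$. Consequently $\pi$ maps a spanning set to a spanning set, i.e. $\pi(\operatorname{span}\{v_1,\dots,v_k\}) = \operatorname{span}\{\pi(v_1),\dots,\pi(v_k)\}$ in $\Omega$. Applying this with $v_i$ ranging over the columns of $B, AB, \dots, A^{r-1}B$ gives
$$
\overline{\mathcal{C}} = \pi(\mathcal{C}) = \pi\!\left(\operatorname{span}\{B, AB, \dots, A^{r-1}B\}\right) = \operatorname{span}\{\bar{B}, \overline{AB}, \dots, \overline{A^{r-1}B}\},
$$
where I am reading $\overline{A^{i}B}$ columnwise (consistent with Definition~\ref{defi-mvec-equi} and Definition~\ref{ABcolum}). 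This is essentially the whole argument once the identification $\overline{\mathcal{C}} = \pi(\mathcal{C})$ is in hand.

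The step I expect to be the main obstacle is precisely that identification: one must check that the reachable states of the projecting system \eqref{projecting} are exactly the equivalence classes of reachable states of some (equivalently, every, by Definition~\ref{D1}) lifting system, and that no ``extra'' reachable classes appear on $\Omega$ that are not images of points of $\mathbb{R}^r$. This requires knowing that the solution of \eqref{projecting} with a given control is $\overline{\xi(t)}$ where $\xi$ solves \eqref{lifting} — which in turn needs $\hat{A}\vec{\circ}$ to act compatibly with the flow, i.e. that $\frac{d}{dt}\overline{\xi(t)} = \hat{A}\vec{\circ}\overline{\xi(t)} + \bar{B}u$ whenever $\dot\xi = A\xi + Bu$. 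I would establish this by differentiating $\overline{\xi(t)}$ using linearity of $\pi$ and the relation $\pi(A\vec{\circ} x) = \hat{A}\vec{\circ}\bar{x}$; with the Cayley--Hamilton argument (the reachable set of \eqref{lifting} equals $\mathcal{C}$) this closes the proof. A secondary technical point worth a sentence is well-definedness: that $\overline{A^iB}$ depends only on $\hat{A}$ and $\bar{B}$, not on the representatives, so the statement of the lemma is coordinate-free on $\Omega$.
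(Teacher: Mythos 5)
Your argument is correct in substance, but it is organized differently from the paper's proof, so a comparison is worthwhile. The paper works directly on $\Omega$: it writes the variation-of-constants formula for the projecting system \eqref{projecting}, applies Cayley--Hamilton to $\hat{A}$ (using that $\hat{A}$ and $A$ have the same rank, and that $\hat{A}^{j}=\widehat{A^{j}}$ under the second semi-tensor product), and then converts each term via $\hat{A}^{j}\vec{\circ}\bar{B}=\overline{A^{j}\vec{\circ}B}=\overline{A^{j}B}$ to read off the span. You instead prove the intertwining statement that the canonical projection $\pi:x\mapsto\bar{x}$ carries trajectories of the lifting system \eqref{lifting} to trajectories of \eqref{projecting}, identify $\overline{\mathcal{C}}=\pi(\mathcal{C})$, and push the classical controllable subspace forward using linearity of $\pi$; Cayley--Hamilton is then only invoked downstairs, for $A$. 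What your route buys is that you never need the quotient-level facts $\hat{A}^{j}=\widehat{A^{j}}$ and the Cayley--Hamilton expansion of $\mathrm{e}^{\hat{A}t}$, only the definitional compatibilities $\hat{A}\vec{\circ}\bar{x}=\overline{A\vec{\circ}x}$ and $\bar{B}u=\overline{Bu}$, and you make explicit the well-definedness point the paper glosses over; what the paper's route buys is that it never has to argue that \emph{every} solution of \eqref{projecting} arises as $\overline{\xi(t)}$ for a lifted trajectory $\xi$, because it simply takes the solution formula on $\Omega$ as its starting point. Note that this last point, which you correctly flag as your main obstacle (uniqueness of solutions on $\Omega$, i.e.\ no ``extra'' reachable classes), is resolved in your sketch only at the same level of rigor as the paper's unproved solution formula \eqref{a0} --- the two assumptions are equivalent --- so your proof is not weaker on this score, but you should state it as an explicit hypothesis or cite the solution theory on $\Omega$ rather than leave it as a differentiation remark.
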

\begin{proof}
It is easy to calculate that the trajectory of the projecting system \eqref{projecting} with the initial $\bar{x}(t_0)$ is
\begin{equation}\label{a0}
\bar{x}(t)=
\mathrm{e}^{\hat{A}t} \vec{\circ} \bar{x}(t_0)+\int_{t_0}^{t} \mathrm{e}^{\hat{A} (t-\tau)} \vec{\circ}\bar{B} u(\tau) \mathrm{d} \tau.
\end{equation}
Let $\mathbf{0}:=\{[\underbrace{0,0, \cdots, 0}_n]^T \mid n=1,2, \cdots\}$.
Suppose $\bar{x}(t_0)$ is driven to the origin $\bar{x}(t)=\mathbf{0}$, then (\ref{a0}) is further written as
\begin{equation}\label{6}
\bar{x}(t_0)=
-\int_{t_0}^{t} \mathrm{e}^{\hat{A} (t_{0}-\tau)} \vec{\circ}\bar{B} u(\tau) \mathrm{d} \tau.
\end{equation}
It can be obtained from Definition \ref{D2} that the rank of $\hat{A}$ is equal to that of $A$. Using Cayley¨CHamilton theorem \cite{Rugh1996Linear}, we have the following Taylor expansion,
$$\mathrm{e}^{\hat{A}(t_{0}-\tau)}=\sum_{j=0}^{r-1} \lambda_{j}(t_{0}-\tau)\hat{A}^{j},$$
where $\hat{A}^j=\hat{A}^0\circ\hat{A}^1\circ\cdots\hat{A}^j$.

Let $\gamma_{j}=\int_{t_{0}}^{t}  \lambda_{j}(t_{0}-\tau)u(\tau) \mathrm{d} \tau$, (\ref{6}) is rewritten as
\begin{equation}\label{eq1}
\begin{aligned}
\bar{x}(t_0)
&=-\sum_{j=0}^{r-1} \hat{A}^{j}\vec{\circ} \bar{B} \gamma_{j}\\
&=-[\begin{array}{llll}
\bar{B} & \hat{A}\vec{\circ} \bar{B} & \cdots & \hat{A}^{r-1}\vec{\circ} \bar{B}
\end{array}]
[\begin{array}{cccc}
\gamma_{0} &\gamma_{1} &\cdots &\gamma_{r-1}
\end{array}]^T
\end{aligned}
\end{equation}
It follows from Definition \ref{secondSTP} that $\hat{A}^j=\widehat{A^{j}}$.
Together with Definition \ref{ABcolum}, we have $\hat{A}^{j}\vec{\circ} \bar{B}=\widehat{A^{j}}\vec{\circ} \bar{B}=\overline{A^{j}\vec{\circ} B}$.
Furthermore, note that the column number of $A$ is equal to the row number of $B$, we have
$\overline{A^{j}\vec{\circ}B}= \overline{A^{j}B}$.
Thus, \eqref{eq1} can be expressed as
$$\begin{aligned}
\bar{x}(t_0)&=-[\begin{array}{llll}
\bar{B} & \overline{A\vec{\circ}B} & \cdots &\overline{{A}^{r-1}\vec{\circ}B}
\end{array}]
[\begin{array}{cccc}
\gamma_{0} &\gamma_{1} &\cdots &\gamma_{r-1}
\end{array}]^T\\
&=-[\begin{array}{llll}
\bar{B} & \overline{AB} & \cdots &\overline{{A}^{r-1}B}
\end{array}]
[\begin{array}{cccc}
\gamma_{0} &\gamma_{1} &\cdots &\gamma_{r-1}
\end{array}]^T
\end{aligned}
$$

Note that $\Omega$ is a vector space (recall Definition \ref{def1}), the controllable subspace of system \eqref{projecting} can be expressed as
$$\overline{\mathcal{C}}=\operatorname{span}\{
  \bar{B}, \overline{AB}, \overline{A^{2} B}, \cdots, \overline{A^{r-1} B}  \}$$
\end{proof}

 In contrast with $\mathcal{C}$, we can see that elements of $\bar{\mathcal{C}}$ are equivalence classes of elements of $\mathcal{C}$, which implies that controllable states of the projecting system \eqref{projecting} are equivalent classes of controllable states of system \eqref{lifting}.

Lemma \ref{L1} shows us that, say, if the initial state $x(t_0)$ is controlled to the state $x(t_e)$ for system \eqref{lifting}, correspondingly $\bar{x}(t_0)$ can be controlled to $\bar{x}(t_e)$ for system \eqref{projecting} on $\Omega$.
This means that the study on the transient dynamics of system \eqref{system} may be performed by its corresponding projecting \eqref{qsystem} on quotient space. That is, for dimension-varying control systems, instead of studying the realization of the transient dynamics of dimension-varying control systems, we may study the corresponding problem on quotient space. In view of  this observation, we restrict attention to the projecting system \eqref{qsystem}.

\subsection{The coordinate transformation of linear system on $\Omega$}

To proceed with our discussion, we only need to focus on the transient dynamic of the projecting system \eqref{qsystem}. This subsection concerns the coordinate transformation of system \eqref{qsystem} aiming at investigating the controllability decomposition on quotient space, which paves the way for the study of the realization problem of the transient dynamics on quotient space.

\begin{definition}\cite{Cheng2019From}
Let $A \in \mathcal{M}_{m \times n} \subset \mathcal{M}, x \in \mathcal{V}_{r} \subset \mathcal{V}.$
Assume $t = n\vee r$. Then the product of $A$ and $x$ is defined as
\begin{equation}
A \overrightarrow{\ltimes} x:=\left(A \otimes I_{t / n}\right)\left(x \otimes \mathbf{1}_{t / r}\right).
\end{equation}
\end{definition}

Now, suppose $y\in \bar{y}$ and $y_{\pi}\in \bar{y}_{\pi}$ are two irreducible vectors of same dimension, where $\bar{y}, \bar{y}_{\pi}\in \Omega $  are two vectors on quotient space $\Omega$, as seen in Definition  \ref{vector-equiv}. we have the following Lemma.

\begin{lemma}\label{L2}

Let $T_{\Lambda}$ be a coordinate transformation matrix such that $y_{\pi}=T_{\Lambda}y$.
Then $\langle T_{\Lambda}\rangle$ is said to be a ``pseudo-coordinate transformation matrix" such that $\bar{y}_{\pi}=\langle T_{\Lambda}\rangle\overrightarrow{\ltimes}\bar{y}$, where $\langle T_{\Lambda}\rangle=\{T_{\Lambda}, T_{\Lambda}\otimes I_{2},\ldots, T_{\Lambda}\otimes I_{n}\}$.
\end{lemma}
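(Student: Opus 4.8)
The plan is to verify directly, at the level of representatives, that applying any element of the set $\langle T_{\Lambda}\rangle$ to any element of the class $\bar y$ via the product $\overrightarrow{\ltimes}$ always yields a vector lying in the single class $\bar y_{\pi}$; this makes $\langle T_{\Lambda}\rangle\overrightarrow{\ltimes}\bar y$ well defined as an element of $\Omega$ (in the same spirit as $\hat A\vec{\circ}\bar x:=\overline{A\vec{\circ}x}$), and the computation simultaneously identifies that element as $\bar y_{\pi}$. Fix the irreducible representative $y\in\mathbb R^{n}$ of $\bar y$; by hypothesis the irreducible representative $y_{\pi}$ of $\bar y_{\pi}$ also lies in $\mathbb R^{n}$, so $T_{\Lambda}\in\mathbb R^{n\times n}$ and $y_{\pi}=T_{\Lambda}y$. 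By Definition \ref{vector-equiv} a generic element of $\bar y$ is $y\otimes\mathbf 1_{j}$, and a generic element of $\langle T_{\Lambda}\rangle$ is $T_{\Lambda}\otimes I_{i}$. The heart of the proof is the evaluation of $(T_{\Lambda}\otimes I_{i})\,\overrightarrow{\ltimes}\,(y\otimes\mathbf 1_{j})$.

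First I would pin down the dimensions entering the definition of $\overrightarrow{\ltimes}$: the matrix $T_{\Lambda}\otimes I_{i}$ has $ni$ columns and $y\otimes\mathbf 1_{j}\in\mathcal V_{nj}$, so $t=(ni)\vee(nj)=n(i\vee j)$ and the augmentation factors are $t/(ni)=(i\vee j)/i$ and $t/(nj)=(i\vee j)/j$. Then, using $(A\otimes B)(C\otimes D)=(AC)\otimes(BD)$ together with $I_{a}\otimes I_{b}=I_{ab}$ and $\mathbf 1_{a}\otimes\mathbf 1_{b}=\mathbf 1_{ab}$, I would collapse the expression to
\[
(T_{\Lambda}\otimes I_{i})\,\overrightarrow{\ltimes}\,(y\otimes\mathbf 1_{j})=\bigl(T_{\Lambda}\otimes I_{i\vee j}\bigr)\bigl(y\otimes\mathbf 1_{i\vee j}\bigr)=(T_{\Lambda}y)\otimes\mathbf 1_{i\vee j}=y_{\pi}\otimes\mathbf 1_{i\vee j}\in\bar y_{\pi}.
\]
Since this holds for every $i$ and $j$, the set $\{(T_{\Lambda}\otimes I_{i})\overrightarrow{\ltimes}(y\otimes\mathbf 1_{j})\}$ is contained in the single equivalence class $\bar y_{\pi}$, so $\langle T_{\Lambda}\rangle\overrightarrow{\ltimes}\bar y$ is well defined and equals $\bar y_{\pi}$, which is precisely the claim. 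Taking $i=j=1$ recovers $y_{\pi}=T_{\Lambda}y$, confirming that $\langle T_{\Lambda}\rangle$ is consistent with the original coordinate transformation $T_{\Lambda}$ on representatives.

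To round things off I would observe that $T_{\Lambda}$, being a coordinate transformation matrix, is invertible, whence each $T_{\Lambda}\otimes I_{i}$ is invertible with inverse $T_{\Lambda}^{-1}\otimes I_{i}$; the same computation applied to $\langle T_{\Lambda}^{-1}\rangle$ shows $\langle T_{\Lambda}^{-1}\rangle\overrightarrow{\ltimes}\bar y_{\pi}=\bar y$, so $\langle T_{\Lambda}\rangle$ is a genuine (invertible) change of coordinates on $\Omega$, justifying the name ``pseudo-coordinate transformation matrix''. I expect the only delicate point to be the Kronecker bookkeeping in the displayed step, namely checking that the nested factors satisfy $I_{i}\otimes I_{(i\vee j)/i}=I_{i\vee j}$ and $\mathbf 1_{j}\otimes\mathbf 1_{(i\vee j)/j}=\mathbf 1_{i\vee j}$; this is exactly where it matters that $\langle T_{\Lambda}\rangle$ is built from $I$-blocks (matching the $\overrightarrow{\ltimes}$ product, whose matrix augmentation uses $I_{t/n}$) rather than the $\mathbf J$-blocks that appear in the $\circ$ and $\vec{\circ}$ products, so that an identity tensored with an identity remains an identity and the collapse goes through.
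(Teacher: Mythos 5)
Your proposal is correct and follows essentially the same route as the paper: the paper's proof likewise takes an arbitrary representative $\xi=y\otimes\mathbf 1_{\alpha}\in\bar y$, evaluates $T_{\Lambda}\overrightarrow{\ltimes}\xi=(T_{\Lambda}\otimes I_{\alpha})(y\otimes\mathbf 1_{\alpha})=(T_{\Lambda}y)\otimes\mathbf 1_{\alpha}=y_{\pi}\otimes\mathbf 1_{\alpha}$ via the Kronecker mixed-product rule, and concludes $\xi_{\pi}\in\bar y_{\pi}$, adding the same nonsingularity remark you make at the end. Your only addition is checking the pairing of a general element $T_{\Lambda}\otimes I_{i}$ of $\langle T_{\Lambda}\rangle$ with a general representative (the paper only applies $T_{\Lambda}$ itself, letting $\overrightarrow{\ltimes}$ supply the inflation), a minor but harmless strengthening of the same computation.
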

\begin{proof}
It is sufficient to show that, for any $\xi\in \bar{y}$, $\xi_{\pi}=T_{\Lambda}\overrightarrow{\ltimes}x\in \bar{y}_{\pi}$ holds.

Since $\xi\in \bar{y}$, then  $\xi=y\otimes 1_{\alpha}$, where $y\in\bar{y}$ is a irreducible vector.
By Lemma \ref{L1}, we have
\begin{equation}\label{Pseudo_coord_trans}
\begin{aligned}\xi_{\pi}=&T_{\Lambda}\overrightarrow{\ltimes}\xi=T_{\Lambda}\overrightarrow{\ltimes}(y\otimes \mathbf{1}_{\alpha})\\=&(T_{\Lambda}\otimes I_{\alpha})(y\otimes \mathbf{1}_{\alpha})\\=&T_{\Lambda}y\otimes\mathbf{1}_{\alpha}\\=&y_{\pi}\otimes \mathbf{1}_{\alpha}.
\end{aligned}
\end{equation}
Hence we have $\xi_{\pi}\leftrightarrow y_{\pi}$ which implies $\xi_{\pi}\in \bar{y}_{\pi}$. I.e., since the arbitrariness of $\xi$, from the derivation of \eqref{Pseudo_coord_trans}, we have   $\bar{y}_{\pi}=\langle T_{\Lambda}\rangle\overrightarrow{\ltimes}\bar{y}$, where $\langle T_{\Lambda}\rangle$ is determined according to the second line of \eqref{Pseudo_coord_trans}.

In addition, the coordinate transformation matrix $T_{\Lambda}$ implies any element of $\langle T_{\Lambda}\rangle$ is non-singular so that $\langle T_{\Lambda}\rangle$ can be viewed as performing the coordinate transformation ability for the vector $\bar{y}$.
\end{proof}

We note that it is clear that $\langle T_{\Lambda}\rangle$ is not a  real coordinate transformation matrix, so we call it ``pseudo-coordinate transformation matrix".
For statement ease, hereafter the terminology  ``coordinate transformation matrix" is applied to $\langle T_{\Lambda}\rangle$ without cause confusion.

Then, using Lemma \ref{L2}, the coordinate transformation of projecting systems on $\Omega$ can be easily obtained.

\begin{lemma}\label{L3}
 Suppose that the lifting system (\ref{lifting}) is transformed into the following system under the coordinate transformation $x_{\pi}=Tx$,
\begin{equation}\label{pi}
\begin{aligned}
&\dot{x}_{\pi}=A_{\pi}(t) x_{\pi}(t)+B_{\pi}(t) u(t), \quad x_{\pi}(t) \in \mathbb{R}^{r},
\end{aligned}
\end{equation}
Then, correspondingly, its projecting system (\ref{projecting}) on quotient space under the ``coordinate transformation" $\bar{x}_{\pi}=\langle T\rangle\overrightarrow{\ltimes}\bar{x}$ can be converted into
\begin{equation}\label{barpi}
\begin{aligned}
&\dot{\bar{x}}_{\pi}(t)=\hat{A}_{\pi} \vec{\circ} \bar{x}_{\pi}(t)+\bar{B}_{\pi} u(t), \quad \bar{x}(t) \in \Omega.
\end{aligned}
\end{equation}
\end{lemma}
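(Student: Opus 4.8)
The idea is to transfer the ordinary, fixed-dimension change of coordinates on a representative lifting system down to $\Omega$ using Lemma~\ref{L2}. Since $T$ is a constant nonsingular $r\times r$ matrix, the classical transformation $x_{\pi}=Tx$ turns the lifting system \eqref{lifting} into \eqref{pi} with $A_{\pi}=TAT^{-1}$ and $B_{\pi}=TB$. Reading \eqref{pi} itself as a lifting system and projecting it onto the quotient space via Definition~\ref{D1} produces $\dot{\bar{x}}_{\pi}=\widehat{A_{\pi}}\vec{\circ}\bar{x}_{\pi}+\overline{B_{\pi}}u$, which is exactly \eqref{barpi} once we set $\hat{A}_{\pi}:=\widehat{A_{\pi}}=\widehat{TAT^{-1}}$ and $\bar{B}_{\pi}:=\overline{B_{\pi}}=\overline{TB}$. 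So the substance of the lemma is the assertion that the quotient-space state appearing in this projected system is precisely $\bar{x}_{\pi}=\langle T\rangle\overrightarrow{\ltimes}\bar{x}$; that is Lemma~\ref{L2} with $T_{\Lambda}=T$, applied to the irreducible representatives of $\bar{x}$ and $\bar{x}_{\pi}$.

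To see this directly on $\Omega$, I would differentiate $\bar{x}_{\pi}=\langle T\rangle\overrightarrow{\ltimes}\bar{x}$; since $\langle T\rangle$ is constant, $\dot{\bar{x}}_{\pi}=\langle T\rangle\overrightarrow{\ltimes}\dot{\bar{x}}$. Substituting \eqref{projecting} and using that $\bar{z}\mapsto\langle T\rangle\overrightarrow{\ltimes}\bar{z}$ is linear on the vector space $\Omega$ (Definition~\ref{def1}) yields $\dot{\bar{x}}_{\pi}=\langle T\rangle\overrightarrow{\ltimes}\big(\hat{A}\vec{\circ}\bar{x}\big)+\langle T\rangle\overrightarrow{\ltimes}\big(\bar{B}u\big)$. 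For the input term, since $Bu\in\mathbb{R}^{r}$ and $T$ is $r\times r$, the computation in \eqref{Pseudo_coord_trans} gives $\langle T\rangle\overrightarrow{\ltimes}\big(\bar{B}u\big)=\overline{(TB)u}=\bar{B}_{\pi}u$. For the drift term, the goal is the intertwining identity
\[
\langle T\rangle\overrightarrow{\ltimes}\big(\hat{A}\vec{\circ}\bar{x}\big)=\widehat{TAT^{-1}}\vec{\circ}\big(\langle T\rangle\overrightarrow{\ltimes}\bar{x}\big)=\hat{A}_{\pi}\vec{\circ}\bar{x}_{\pi},
\]
which together with the previous displays gives \eqref{barpi}.

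Establishing this intertwining identity is the main obstacle. I would verify it on representatives: for the irreducible $x\in\bar{x}$ and the $r\times r$ representative $A\in\hat{A}$, all of $\vec{\circ}$ and $\overrightarrow{\ltimes}$ collapse to ordinary matrix products, where the identity is merely $(TAT^{-1})(Tx)=T(Ax)$; the real work is checking that replacing $x$ by $x\otimes\mathbf{1}_{\alpha}$ and $A$ by $A\otimes\mathbf{J}_{\beta}$ yields equivalent results on both sides. This needs the mixed-product rule $(P\otimes Q)(R\otimes S)=(PR)\otimes(QS)$ together with $\mathbf{J}_{k}\mathbf{1}_{k}=\mathbf{1}_{k}$, $\mathbf{J}_{k}\mathbf{J}_{k}=\mathbf{J}_{k}$, $I_{a}\otimes I_{b}=I_{ab}$, and a careful reconciliation of the least-common-multiple factors hidden in Definitions~\ref{secondSTP} and \ref{ABcolum}; in particular one must see that the family $\langle T\rangle$ built in the second line of \eqref{Pseudo_coord_trans} is exactly the one making the $\otimes I_{\alpha}$ from $\langle T\rangle$, the $\otimes\mathbf{J}_{\beta}$ from $\hat{A}$, and the $\otimes\mathbf{1}_{\gamma}$ from $\bar{x}$ cancel consistently, after which well-definedness of both sides as elements of $\Omega$ closes the argument. (Were $T$ time-varying, an extra $\big(\tfrac{d}{dt}\langle T\rangle\big)\overrightarrow{\ltimes}\bar{x}$ would be absorbed into $\hat{A}_{\pi}(t)$, matching the notation in \eqref{pi}.)
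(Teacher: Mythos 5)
Your proposal is correct and is essentially the argument the paper intends: the paper gives no explicit proof of this lemma, stating only that it "can be easily obtained" from Lemma~\ref{L2} together with Definition~\ref{D1}, which is exactly your first paragraph (project the transformed representative system $(A_{\pi},B_{\pi})=(TAT^{-1},TB)$ and identify the transformed state class via Lemma~\ref{L2}). The "main obstacle" you single out is in fact immediate at the paper's level of rigor: taking the irreducible representative $x\in\mathbb{R}^{r}$ and the $r\times r$ representatives $A\in\hat{A}$ and $T$, both sides of your intertwining identity equal $\overline{TAx}$ by Lemma~\ref{L2} and the definitions $\hat{A}\vec{\circ}\bar{x}:=\overline{A\vec{\circ}x}$ and $\widehat{TAT^{-1}}\vec{\circ}\overline{Tx}:=\overline{(TAT^{-1})(Tx)}$, so the least-common-multiple bookkeeping you describe is precisely the well-definedness of these equivalence-class operations, which the paper (citing Cheng) already takes as given.
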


Lemma \ref{L2} and Lemma \ref{L3} provide us a proper ``coordinate transformation" to bring systems into the controllability decomposition on quotient space, which is significant for us to proceed our study on the the realization of the transient dynamics of dimension-varying control systems on quotient space.
\section{Realization of the transient dynamics of dimension-varying control systems on $\Omega$}\label{S4}
In this section, a condition is presented for the realization of the transient dynamics of dimension-varying control systems.
To formulate our main result precisely, we shall need some more notations and terminology.

Let $\mathcal{W}$ be a vector space.
Denote $\bar{\mathcal{W}}:=\{\bar{\omega}\mid \omega\in \mathcal{W}\}$.
It is easy to see $\text{dim}(\bar{\mathcal{W}})=\text{dim}(\mathcal{W})$.
\begin{definition}
Let $\mathcal{V}_m$ and $\mathcal{V}_n$ be two vector space.
The sum of $\mathcal{V}_m$ and $\mathcal{V}_n$ is defined as $\mathcal{V}_m |\vec{+} \mathcal{V}_n:=\left\{x |\vec{+} y| x \in \mathcal{V}_m, y \in \mathcal{V}_n\right\}$.
Then the sum of $\bar{\mathcal{V}}_m$ and $\bar{\mathcal{V}}_n$ is defined as $\bar{\mathcal{V}}_m |\vec{+} \bar{\mathcal{V}}_n:=\overline{\mathcal{V}_m |\vec{+} \mathcal{V}_n}.$
\end{definition}
Given a vector $\bm{\varepsilon}=(\varepsilon_1, \varepsilon_2, \ldots, \varepsilon_m)^T\in \mathcal{V}_m$ and a vector space $\mathcal{V}_n$, where $m\leq n$. The \emph{embedded mapping} of $\bm{\varepsilon}$ on $\mathcal{V}_n$ is defined as $\varphi_{n}(\bm{\varepsilon}):=(\varepsilon_1, \varepsilon_2, \ldots, \varepsilon_m, \underbrace{0, 0, \ldots, 0}_{ n-m} )^T$, then we denote the embedded space by $\mathcal{V}_n^m =\text{span}\{\varphi_{n}(\bm{\varepsilon})\}$.
That is, the missing elements of $\bm{\varepsilon}$ is filled up with $0$.
\begin{definition}
Let $\mathcal{V}_m$ and $\mathcal{V}_n$ be two vector space and $m\leq n$.
 the direct sum  of $\mathcal{V}_n^m$ and $\mathcal{V}_n$ is defined as $\mathcal{V}_n^m \vec{\oplus} \mathcal{V}_n:=\left\{\varphi_{n}(x) + y| \varphi_{n}(x) \cap y=\emptyset, x \in \mathcal{V}_m, y \in \mathcal{V}_n\right\}$.
Then the direct sum of $\bar{\mathcal{V}}_n^m$ and $\bar{\mathcal{V}}_n$ is defined as $\bar{\mathcal{V}}_n^m \vec{\oplus} \bar{\mathcal{V}}_n:=\overline{\mathcal{V}_n^m \vec{\oplus} \mathcal{V}_n}$.
\end{definition}

Without loss of generality, we assume $p<q$ for system \eqref{system}.
Then a condition for realization of transient dynamics of dimension-varying control system \eqref{qsystem} is given as follows.

\begin{theorem}\label{T1}
Consider system \eqref{qsystem}.
Let $\bar{\mathcal{C}}_{i}, i=1, 2$ be controllable subspace of $\bar{\Sigma}_{i}$.
Then $\bar{x}(t_{0})$ can be controlled to a state $\bar{y}(t_{e})$ during the transient period $[t_0,t_e]$ on quotient space $\Omega$ if there exists
$\bar{\mathcal{C}}_{2^*}\subset\bar{\mathcal{C}}_{2}$ such that
$$\bar{\mathcal{C}}_{1}\vec{\oplus} \bar{\mathcal{C}}_{2^*}=\bar{\mathbb{R}}^{q},$$ where $\bar{\mathbb{R}}_{q}=\{\bar y~|~y\in \mathbb{R}^q\}$.
\end{theorem}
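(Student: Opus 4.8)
The plan is to realize the transition by concatenating the two subsystems. Fix any intermediate instant $t_1\in(t_0,t_e)$, let $\bar{\Sigma}_1$ act with a control $u_1$ on $[t_0,t_1]$ and $\bar{\Sigma}_2$ act with a control $u_2$ on $[t_1,t_e]$, with the $p$-dimensional state produced by $\bar{\Sigma}_1$ embedded into $\bar{\mathbb{R}}^q$ at the switching instant through the embedded mapping $\varphi_q$. I would then show that the hypothesis $\bar{\mathcal{C}}_1\vec{\oplus}\bar{\mathcal{C}}_{2^*}=\bar{\mathbb{R}}^q$ forces the set of states reachable from $\bar{x}(t_0)$ at time $t_1$ and the set of states at $t_1$ from which $\bar{y}(t_e)$ can be reached to have a common point, and take $\bar{x}(t_1)$ there.

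First I would write down the two reachable sets. Because $\Omega$ is a vector space (Definition \ref{def1}) and, by Lemma \ref{L1}, the controllable subspace of $\bar{\Sigma}_i$ on $\Omega$ is $\bar{\mathcal{C}}_i$, the computation in the proof of Lemma \ref{L1} applied on $[t_0,t_1]$ shows that the set of states reachable under $\bar{\Sigma}_1$ from $\bar{x}(t_0)$ equals the affine set $\mathrm{e}^{\hat{A}_1(t_1-t_0)}\vec{\circ}\bar{x}(t_0)\,\vec{|+}\,\bar{\mathcal{C}}_1$; after the $\varphi_q$-embedding this is an affine subspace $\bar{a}\,\vec{|+}\,\bar{\mathcal{C}}_1$ of $\bar{\mathbb{R}}^q$, with $\bar{\mathcal{C}}_1$ read as its embedding in $\bar{\mathbb{R}}^q$ (exactly as it enters the hypothesis). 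Likewise, since $\bar{\mathcal{C}}_2$ is $\hat{A}_2$-invariant (which follows, as in the proof of Lemma \ref{L1}, from the Cayley--Hamilton theorem) and hence invariant under $\mathrm{e}^{\pm\hat{A}_2 s}$, running $\bar{\Sigma}_2$ backward from $\bar{y}(t_e)$ shows that the set of states at $t_1$ from which $\bar{y}(t_e)$ is reachable under $\bar{\Sigma}_2$ equals the affine subspace $\bar{b}\,\vec{|+}\,\bar{\mathcal{C}}_2$ of $\bar{\mathbb{R}}^q$, where $\bar{b}=\mathrm{e}^{-\hat{A}_2(t_e-t_1)}\vec{\circ}\bar{y}(t_e)$.

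The core step is linear-algebraic. From $\bar{\mathcal{C}}_{2^*}\subseteq\bar{\mathcal{C}}_2$ and $\bar{\mathcal{C}}_1\vec{\oplus}\bar{\mathcal{C}}_{2^*}=\bar{\mathbb{R}}^q$ one gets $\bar{\mathcal{C}}_1\,\vec{|+}\,\bar{\mathcal{C}}_2=\bar{\mathbb{R}}^q$. Hence $\bar{b}\,\vec{\vdash}\,\bar{a}\in\bar{\mathbb{R}}^q$ can be written as $\bar{b}\,\vec{\vdash}\,\bar{a}=\bar{c}_1\,\vec{|+}\,\bar{c}_2$ with $\bar{c}_1\in\bar{\mathcal{C}}_1$ and $\bar{c}_2\in\bar{\mathcal{C}}_2$, and setting $\bar{x}(t_1):=\bar{a}\,\vec{|+}\,\bar{c}_1=\bar{b}\,\vec{\vdash}\,\bar{c}_2$ produces a state lying in both affine sets — that is, two affine subspaces of $\bar{\mathbb{R}}^q$ whose direction spaces sum to all of $\bar{\mathbb{R}}^q$ necessarily intersect. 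Consequently there is a control $u_1$ on $[t_0,t_1]$ steering $\bar{x}(t_0)$ to $\bar{x}(t_1)$ under $\bar{\Sigma}_1$ and a control $u_2$ on $[t_1,t_e]$ steering $\bar{x}(t_1)$ to $\bar{y}(t_e)$ under $\bar{\Sigma}_2$; concatenating $u_1$ and $u_2$ realizes the transient dynamics in the sense of Definition \ref{defin-realization-cite}.

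The main obstacle I expect is not the geometric argument but the dimension bookkeeping around the switch: one has to verify that $\varphi_q$ places the phase-1 trajectory and the controllable subspace $\bar{\mathcal{C}}_1$ genuinely inside $\bar{\mathbb{R}}^q$, that the flows $\mathrm{e}^{\hat{A}_1(t_1-t_0)}\vec{\circ}(\cdot)$ and $\mathrm{e}^{-\hat{A}_2(t_e-t_1)}\vec{\circ}(\cdot)$ respect these identifications and keep the states within $\bar{\mathbb{R}}^q$, and that the operations $\vec{|+}$, $\vec{\vdash}$ and $\vec{\oplus}$ on the mixed-dimensional subspaces involved are well defined and obey the usual vector-space identities — all of which rest on Definitions \ref{def1} and \ref{D1} together with the $\hat{A}_i$-invariance of $\bar{\mathcal{C}}_i$. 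Once that is settled, the affine-intersection step and the concatenation of controls are routine.
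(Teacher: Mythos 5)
Your proposal is correct, but it proves the theorem by a genuinely different route than the paper. You fix an intermediate instant $t_1$, describe the forward reachable set of $\bar{\Sigma}_1$ from $\bar{x}(t_0)$ (zero-padded into $\bar{\mathbb{R}}^{q}$ via $\varphi_q$) and the backward reachable set of $\bar{y}(t_e)$ under $\bar{\Sigma}_2$ as affine subspaces with direction spaces $\bar{\mathcal{C}}_{1}$ and $\bar{\mathcal{C}}_{2}$, observe that the hypothesis forces $\bar{\mathcal{C}}_{1} \vec{|+}\, \bar{\mathcal{C}}_{2}=\bar{\mathbb{R}}^{q}$ so the two affine sets intersect, and concatenate the controls at an intersection point. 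The paper uses the same zero-padding idea but places it at the level of the system: it augments $\Sigma_1$ with $q-p$ trivial states as in \eqref{pie} to get $\bar{\Sigma}_3$, and then runs a controllability-decomposition argument, invoking the pseudo-coordinate transformations of Lemma \ref{L2} and Lemma \ref{L3} to split $\bar{\Sigma}_3$ and $\bar{\Sigma}_2$ into controllable and uncontrollable blocks, with two cases according to whether $\operatorname{dim}(\bar{\mathcal{C}}_{2})=q$ or $\operatorname{dim}(\bar{\mathcal{C}}_{2})<q$ (systems \eqref{xsys}--\eqref{ysys} and \eqref{12}--\eqref{Tysys}), and arguing blockwise that every component not steerable by $\bar{\Sigma}_3$ lies in a controllable block of $\bar{\Sigma}_2$. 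Your route buys a shorter, case-free argument that makes transparent exactly where the hypothesis is used (only $\bar{\mathcal{C}}_{1} \vec{|+}\, \bar{\mathcal{C}}_{2}\supseteq\bar{\mathbb{R}}^{q}$ is needed) and specifies the control schedule explicitly ($u_1$ on $[t_0,t_1]$, $u_2$ on $[t_1,t_e]$); the paper's route buys explicit coordinates exhibiting which modes are absorbed by which subsystem, which is what the lemmas of Section \ref{S3} were developed for and what feeds the modeling discussion in Section \ref{S5}. The quotient-space bookkeeping you flag (well-definedness of $\vec{|+}$, $\vec{\vdash}$, $\vec{\oplus}$ across dimensions, invariance of $\bar{\mathcal{C}}_{2}$ under $\mathrm{e}^{\pm\hat{A}_2 s}$, and compatibility of the $\varphi_q$-embedding with the transition semantics of Definition \ref{defin-realization-cite}) is treated no more rigorously in the paper's own proof, so your argument meets the paper's standard.
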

\begin{proof}

With a mild modification, the system $\Sigma_{1} $ might be described as follows:
 \begin{equation}\label{pie}
\left\{\begin{array}{l}
\dot{x}=A x+B u_1 , \quad x \in \mathbb{R}^{p},\\
\dot{x^{c}}=\mathbf{0}_{q-p}, \quad x^{c} \in \mathbb{R}^{q-p},
\end{array}\right.
\end{equation}
where $\mathbf{0}_{q-p}=[\underbrace{0,\ldots,0}_{q-p}]$.
Let $w=(x, x^{c})$, then the projecting system of \eqref{pie} on quotient space $\Omega$ is denoted by $$\bar{\Sigma}_{3}: \dot{\bar{w}}(t)=\hat{A}_{3}\vec{\circ} \bar{w}(t)+\bar{B}_{3} u_{1}(t), \quad \bar{w} \in \Omega.$$ Note that  $\bar{\Sigma}_{3}$ has the same controllable space as $\bar{\Sigma}_{1}$. Then the transient dynamics of system \eqref{qsystem} is identical to the transient dynamics of the the following system.

\begin{equation}\label{qsystem1}
\begin{aligned}
\bar{\Sigma}_{3}: \dot{\bar{w}}(t)=\hat{A}_{3}\vec{\circ} \bar{w}(t)+\bar{B}_{3} u_{1}(t), \quad \bar{w} \in \Omega,\\
\bar{\Sigma}_{2}: \dot{\bar{y}}(t)=\hat{A}_{2} \vec{\circ} \bar{y}(t)+\bar{B}_{2} u_{2}(t), \quad \bar{y} \in \Omega.
 \end{aligned}
\end{equation}
Therefore, we next study the transient dynamics of system \eqref{qsystem1}.

For system \eqref{qsystem1}, by Definition \ref{defin-realization-cite}, we need to show that there exist controls $u_{1}$ and $u_{2}$ such that the transient dynamics of system \eqref{qsystem1} can be realized from a given state $\bar{w}(t_0)$ of system $\bar{\Sigma}_{3}$ to the state $\bar{y}(t_e)$ of system $\bar{\Sigma}_{2}$ during the transient period $[t_0,t_e]$.

Let $\mathcal{C}_{3}$ and  $\bar{\mathcal{C}}_{3}$ be the controllable subspace of system \eqref{pie} and system $\bar{\Sigma}_{3}$, respectively. With the assumption $p<q$, it can be seen that $\operatorname{dim}(\mathcal{C}_{3})<q.$

Next we consider two cases.

Case 1: $\operatorname{dim}(\bar{\mathcal{C}}_{2})=q.$

Since $\operatorname{dim}(\mathcal{C}_{3})<q$, there exist a coordinate transformation $\mathbf{w}=T_1w$ such that system \eqref{pie} can be converted into
\begin{equation}
\left\{\begin{array}{l}
\dot{\mathbf{w}}_{1}=A_{3}^{11} \mathbf{w}_{1}+A_{3}^{12} \mathbf{w}_{2}+B_{3} u_{1}, \\
\dot{\mathbf{w}}_{2}=A_{3}^{22}\mathbf{w}_{2}.
\end{array}\right.
\end{equation}
Then, by Lemma \ref{L3}, accordingly there exist the coordinate transformation $\bar{\mathbf{w}}=\langle T_1\rangle\vec{\ltimes}\bar{w}$ such that $\bar{\Sigma}_3$ is converted into
\begin{equation}\label{xsys}
\left\{\begin{array}{l}
\dot{\bar{\mathbf{w}}}_{1}=\hat{A}_{3}^{11}\vec{\circ} \bar{\mathbf{w}}_{1}+\hat{A}_{3}^{12} \vec{\circ}\bar{\mathbf{w}}_{2}+\bar{B}_{3} u_{1}, \\
\dot{\bar{\mathbf{w}}}_{2}=\hat{A}_{3}^{22}\vec{\circ}\bar{\mathbf{w}}_{2}.
\end{array}\right.
\end{equation}
Note that $\operatorname{dim}(\bar{\mathcal{C}}_{2})=q$,  similarly, under the ``coordinate transformation" $\bar{\mathbf{y}}=\langle T_1\rangle\vec{\ltimes}\bar{y}$, $\bar{\Sigma}_{2}$ is converted onto
\begin{equation}\label{ysys}
\left\{\begin{array}{l}
\dot{\bar{\mathbf{y}}}_{1}=\hat{A}_{2}^{11} \vec{\circ}\bar{\mathbf{y}}_{1} +\hat{A}_{2}^{12}\vec{\circ}\bar{\mathbf{y}}_{2}+\bar{B}_{2}^{1}u_{2}\\
\dot{\bar{\mathbf{y}}}_{2}=\hat{A}_{2}^{21} \vec{\circ}\bar{\mathbf{y}}_{1}+\hat{A}_{2}^{22} \vec{\circ}\bar{\mathbf{y}}_{2}+\bar{B}_{2}^{2}u_{2}
\end{array}\right.
\end{equation}
Let $\bar{w}(t_0)=(\bar{\mathbf{w}}_{1}(t_{0}), \bar{\mathbf{w}}_{2}(t_0))$ and $\bar{y}(t_{e})=(\bar{\mathbf{y}}_{1}(t_{e}),\bar{\mathbf{y}}_{2}(t_{e}))$. Then it is enough to show that the transient dynamics can be realized from system \eqref{xsys} to system \eqref{ysys}. Clearly $\bar{\mathbf{w}}_{1}(t_{0})$ is controllable. Next, we only need to consider how to control $\bar{\mathbf{w}}_{2}(t_{0})$ to $\bar{\mathbf{y}}_{2}(t_{e})$. Note that $(\hat{A}_{2}^{22}, \bar{B}_{2}^{2})$ is controllable in \eqref{ysys}, by Definition \ref{defin-realization-cite},
there is $u_2$ such that $\bar{\mathbf{w}}_{2}(t_0)$ is controlled to $\bar{\mathbf{y}}_{2}(t_{e})$.
Thus, $\bar{w}(t_{0})$ can be controlled to $\bar{y}(t_{e})$.

Case 2: $\operatorname{dim}(\bar{\mathcal{C}}_{2})<q.$

Let $r=\operatorname{dim}(\bar{\mathcal{C}}_{1})$, clearly $\bar{\mathbf{w}}_{2}(t)\in \bar{\mathbb{R}}^{q-p}\vec{\oplus} \bar{\mathbb{R}}^{p-r}$. By the assumption
$\bar{\mathcal{C}}_{1}\vec{\oplus}\bar{\mathcal{C}}_{2^*}=\bar{\mathbb{R}}_{q}$ with $\bar{\mathcal{C}}_{2^*}\subset\bar{\mathcal{C}}_{2}$, it follows $\bar{\mathcal{C}}_{2^*}= \bar{\mathbb{R}}^{q-p}\vec{\oplus} \bar{\mathbb{R}}^{p-r}$. i.e., $\bar{\mathbf{w}}_{2}(t)\subset \bar{\mathcal{C}}_{2}$.  So we have
\begin{equation}\label{coordinate_decompostion}
\bar{\mathbb{R}}^{q}=\bar{\mathcal{C}}_{1}\vec{\oplus}\bar{\mathbb{R}}^{q-p}\vec{\oplus} \bar{\mathbb{R}}^{p-r}.
\end{equation}
Based on \eqref{coordinate_decompostion}, by Lemma \ref{L3}, there is a proper ``coordinate transformation" $\bar{\mathbf{w}}=\langle T_2\rangle\vec{\ltimes}w$ to
further split $\bar{\Sigma}_3$ into
\begin{equation}\label{12}
\left\{\begin{array}{l}
\dot{\bar{\mathbf{w}}}_{1}=\hat{A}_{3}^{11} \vec{\circ} \bar{{\mathbf{w}}}_{1}+\hat{A}_{3}^{12}\vec{\circ}\bar{\mathbf{w}}_{2}^{1}+\hat{A}_{3}^{13}\vec{\circ}\bar{\mathbf{w}}_{2}^2+\bar{B}_{3}^{1}u_{1} ,\quad{\bar{\mathbf{w}}}_{1}\in \bar{\mathbb{R}}^{r},\\
\dot{\bar{\mathbf{w}}}_{2}^{1}=\hat{A}_{3}^{22}\vec{\circ}\bar{\mathbf{w}}_{2}^{1}+\hat{A}_{3}^{23}\vec{\circ}\bar{\mathbf{w}}_{2}^2,
~~~~~~~~~~~~~~~~~~~~~~~~~~~~~~~~\quad{\bar{\mathbf{w}}}_{2}^{1}\in\bar{\mathbb{R}}^{p-r},\\
\dot{\bar{\mathbf{w}}}_{2}^2=\hat{A}_{3}^{33}\vec{\circ}\bar{\mathbf{w}}^2_{2},
~~~~~~~~~~~~~~~~~~~~~~~~~~~~~~~~~~~~~~~~~~~~~~~~~~~\quad{\bar{\mathbf{w}}}_{2}^2\in\bar{\mathbb{R}}^{q-p}.
\end{array}\right.
\end{equation}
Using the same ``coordinate transformation matrix" $\langle T_2\rangle$ , $\bar{\Sigma}_2$ can be split into
\begin{equation}\label{Tysys}
\begin{aligned}
\left\{\begin{array}{l}
\dot{\bar{\mathbf{y}}}_{1}^{1}=\hat{A}_{2}^{11}\vec{\circ} \bar{\mathbf{y}}_{1}^{1}, ~~~~~~~~~~~~~~~~~~~~~~~~~~~~~~~~~~~~~~~~~~~~~~~~\quad {\bar{\mathbf{y}}}_{1}^{1}\in \bar{\mathbb{R}}^{r},\\
\dot{\bar{\mathbf{y}}}_{1}^{2}=\hat{A}_{2}^{21}\vec{\circ} \bar{\mathbf{y}}_{1}^{1}+\hat{A}_{2}^{22}\vec{\circ}\bar{\mathbf{y}}_{1}^{2}+\hat{A}_{2}^{23}\vec{\circ}\bar{\mathbf{y}}_{2}+\bar{B}_{2}^{1}u_{2},\quad {\bar{\mathbf{y}}}_{1}^{2}\in\bar{\mathbb{R}}^{p-r},\\
\dot{\bar{\mathbf{y}}}_{2}=\hat{A}_{2}^{31}\vec{\circ} \bar{\mathbf{y}}_{1}^{1}+\hat{A}_{2}^{32}\vec{\circ}\bar{\mathbf{y}}_{1}^{2}+\hat{A}_{2}^{33}\vec{\circ}\bar{\mathbf{y}}_{2}+\bar{B}_{2}^{2}u_{2},\quad {\bar{\mathbf{y}}}_{2}\in\bar{\mathbb{R}}^{q-p}.
\end{array}\right.
\end{aligned}
\end{equation}

Now we analyze the transient dynamics of system \eqref{12} and system \eqref{Tysys}.

 For system \eqref{12}, since $(\hat{A}_{3}^{11}, \bar{B}_{3}^{1})$ is controllable, there is the control $u_{1}$ such that the state $\bar{\mathbf{w}}_{1}(t) $ is controlled to $\bar{\mathbf{y}}_{1}^{1}(t_e)$. Again since $(\hat{A}_{2}^{22}, \hat{B}_{2}^{1})$ and $(\hat{A}_{2}^{33}, \hat{B}_{2}^{2})$ are controllable, there is the control $u_{2}$ such that the states $\bar{\mathbf{w}}^{1}_{2}(t)$ and $\bar{\mathbf{w}}_{2}^{2}(t)$ are controllable. That is, for system \eqref{12} and system \eqref{Tysys}, we can find control $u_{1}$ and $u_{2}$ such that the transient dynamic from the starting state $\bar{w}(t_0)$ to the destination state $\bar{y}(t_e)$ during the transient period $[t_0,t_e]$.
\end{proof}
As a direct consequence, the version of Theorem \ref{T1} on Euclidean space can be stated as follows.
\begin{corollary}\label{C1}
Consider system \eqref{system}.
Let $\mathcal{C}_{i}, i=1, 2$ be the controllable subspace of $\Sigma_{i}$.
Then the dimension transience of system \eqref{system} is properly realized if there exists
$\mathcal{C}_{2^*}\subset\mathcal{C}_{2}$ such that
$$\mathcal{C}_{1}\vec{\oplus} \mathcal{C}_{2^*}=\mathbb{R}^{q}.$$
\end{corollary}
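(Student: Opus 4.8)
The plan is to obtain Corollary \ref{C1} as an immediate consequence of Theorem \ref{T1} together with Lemma \ref{L1}, exploiting the fact that, by Definition \ref{defin-realization-cite}, the phrase ``the dimension transience of system \eqref{system} is properly realized'' \emph{is defined} to mean precisely that the projecting system \eqref{qsystem} admits controls $u_1$, $u_2$ steering $\bar{x}(t_0)$ to $\bar{y}(t_e)$ on $\Omega$. Thus the whole task reduces to checking that the Euclidean hypothesis $\mathcal{C}_1\vec{\oplus}\mathcal{C}_{2^*}=\mathbb{R}^q$ implies the quotient hypothesis $\bar{\mathcal{C}}_1\vec{\oplus}\bar{\mathcal{C}}_{2^*}=\bar{\mathbb{R}}^q$ of Theorem \ref{T1}, with $\bar{\mathcal{C}}_{2^*}\subset\bar{\mathcal{C}}_2$; the normalization $p<q$ is inherited without loss of generality exactly as in Theorem \ref{T1}.

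First I would invoke Lemma \ref{L1} to identify, for $i=1,2$, the controllable subspace of $\bar{\Sigma}_i$ as $\bar{\mathcal{C}}_i=\{\bar{\omega}\mid\omega\in\mathcal{C}_i\}$, the image of $\mathcal{C}_i$ under the quotient map $\omega\mapsto\bar{\omega}$; in particular $\dim\bar{\mathcal{C}}_i=\dim\mathcal{C}_i$ as noted at the beginning of Section \ref{S4}. Setting $\bar{\mathcal{C}}_{2^*}:=\{\bar{\omega}\mid\omega\in\mathcal{C}_{2^*}\}$, the inclusion $\mathcal{C}_{2^*}\subset\mathcal{C}_2$ passes to $\bar{\mathcal{C}}_{2^*}\subset\bar{\mathcal{C}}_2$ because the quotient map is order-preserving, which is the first requirement of Theorem \ref{T1}.

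Next I would apply the quotient map to both sides of $\mathcal{C}_1\vec{\oplus}\mathcal{C}_{2^*}=\mathbb{R}^q$, reading $\mathcal{C}_1$ through the embedded mapping $\varphi_q$ into $\mathbb{R}^q$ — which is exactly the zero-padding used to form $\bar{\Sigma}_3$ from $\Sigma_1$ in the proof of Theorem \ref{T1}. By the definition of the direct sum on quotient space one has $\overline{\mathcal{C}_1\vec{\oplus}\mathcal{C}_{2^*}}=\bar{\mathcal{C}}_1\vec{\oplus}\bar{\mathcal{C}}_{2^*}$, while $\overline{\mathbb{R}^q}=\bar{\mathbb{R}}^q$ by notation, so $\bar{\mathcal{C}}_1\vec{\oplus}\bar{\mathcal{C}}_{2^*}=\bar{\mathbb{R}}^q$. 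Both hypotheses of Theorem \ref{T1} now hold, so it supplies controls $u_1$, $u_2$ driving $\bar{x}(t_0)$ to $\bar{y}(t_e)$ on $\Omega$; by Definition \ref{defin-realization-cite} this is exactly the assertion of the corollary.

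The step I expect to be the main obstacle is the bookkeeping that makes ``barring'' commute with the embedded mapping $\varphi_q$ and with $\vec{\oplus}$: verifying that $\overline{\mathcal{C}_1\vec{\oplus}\mathcal{C}_{2^*}}=\bar{\mathcal{C}}_1\vec{\oplus}\bar{\mathcal{C}}_{2^*}$ is a genuine equality of subspaces of $\Omega$ rather than merely a one-sided inclusion, and that the transversality condition ($\varphi_q(x)\cap y=\emptyset$) built into $\vec{\oplus}$ is preserved under the quotient map. Once this compatibility is spelled out, the corollary follows with essentially no further work, being a faithful translation of Theorem \ref{T1} back to Euclidean coordinates.
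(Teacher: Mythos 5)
Your proposal is correct and follows essentially the same route as the paper, which states Corollary \ref{C1} as a direct consequence of Theorem \ref{T1}: you simply bar the Euclidean data, note via Lemma \ref{L1} that $\bar{\mathcal{C}}_i$ is the image of $\mathcal{C}_i$ under the quotient map, and transfer the hypothesis using the fact that $\bar{\mathcal{V}}_n^m \vec{\oplus} \bar{\mathcal{V}}_n$ is by definition $\overline{\mathcal{V}_n^m \vec{\oplus} \mathcal{V}_n}$, so the compatibility you flag as the main obstacle is built into the paper's definition rather than something requiring extra work.
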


\section{Condition for Modeling the Transient Dynamics of Dimension-Varying Systems}\label{S5}
A necessary condition for modeling the transient dynamics is proposed in this section, based on which a new scheme for modeling the transient dynamics of dimension-varying control systems is developed.
Accordingly, a numerical example is given to illustrate the effectiveness of our proposed the theoretical results.

Based on Definition \ref{defin-realization-cite}, the transient dynamics of dimension-varying control system \eqref{qsystem} on quotient space $\Omega$ is modeled as the following unified form:
\begin{equation}\label{tsystem}
\dot{\bar{z}}=\bar{F}(\bar{z}, u_{1}, u_{2}), \bar{z}\in \Omega.
\end{equation}
\eqref{tsystem} is called \emph{transient dynamics model}. Our purpose is to realize the transient process of \eqref{qsystem} with the help of the transient dynamics model \eqref{tsystem} on $\Omega$.
Based on Theorem \ref{L1}, a necessary condition for modeling the transient dynamics can be obtained naturally.
\begin{corollary}\label{C2}
With the same notations as Theorem \ref{T1}. Let $\bar{\mathcal{C}_{z}}$ be the the controllable subspace of \eqref{tsystem}. Then a necessary condition for modeling the transient dynamics of system \eqref{qsystem} is as follows:
\begin{equation}\label{equa_trans_dyna}
  \bar{\mathcal{C}}_{1}+\bar{ \mathcal{C}}_{2}\subset \bar{\mathcal{C}}_{z}.
\end{equation}

\end{corollary}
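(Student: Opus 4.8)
The plan is to argue by contradiction via the realization requirement in Definition \ref{defin-realization-cite} together with the description of controllable subspaces furnished by Lemma \ref{L1}. First I would unpack what ``modeling the transient dynamics'' demands: the transient dynamics model \eqref{tsystem} must be able to steer \emph{any} admissible starting state $\bar{x}(t_0)$ coming from $\bar{\Sigma}_1$ to \emph{any} admissible target state $\bar{y}(t_e)$ lying in $\bar{\Sigma}_2$. By Lemma \ref{L1} the set of states of $\bar{\Sigma}_1$ that can be reached (or equivalently, from which the origin can be reached) is exactly $\bar{\mathcal{C}}_1$, and similarly the target states that $\bar{\Sigma}_2$ can produce or sustain form $\bar{\mathcal{C}}_2$. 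Hence a necessary precondition for \eqref{tsystem} to realize the transient process from $\bar{\Sigma}_1$ to $\bar{\Sigma}_2$ is that every element of $\bar{\mathcal{C}}_1$ can be driven by \eqref{tsystem} to every element of $\bar{\mathcal{C}}_2$.

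Next I would invoke Lemma \ref{L1} once more, now applied to the transient dynamics model \eqref{tsystem} itself (after linearizing or restricting $\bar{F}$ to its controllable part, exactly as $\bar{\mathcal{C}}_z$ is defined): the states that \eqref{tsystem} can connect all lie within the single controllable subspace $\bar{\mathcal{C}}_z$ on $\Omega$. Therefore, if some $\bar{c}_1\in\bar{\mathcal{C}}_1$ were not in $\bar{\mathcal{C}}_z$, then starting \eqref{tsystem} at $\bar{c}_1$ one could never reach a target in $\bar{\Sigma}_2$ — indeed one could not even leave the affine leaf of $\bar{\mathcal{C}}_z$ through $\bar{c}_1$ — contradicting the realization requirement; the symmetric argument applied to a putative $\bar{c}_2\in\bar{\mathcal{C}}_2\setminus\bar{\mathcal{C}}_z$ handles the target side. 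Consequently $\bar{\mathcal{C}}_1\subset\bar{\mathcal{C}}_z$ and $\bar{\mathcal{C}}_2\subset\bar{\mathcal{C}}_z$, and since $\bar{\mathcal{C}}_z$ is a subspace of the vector space $\Omega$ (Definition \ref{def1}), it is closed under the sum operation $\vec{|+}$, whence $\bar{\mathcal{C}}_1+\bar{\mathcal{C}}_2\subset\bar{\mathcal{C}}_z$, which is \eqref{equa_trans_dyna}.

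I expect the main obstacle to be a careful, rather than hand-waving, treatment of what it means for the (possibly nonlinear) model \eqref{tsystem} to have a ``controllable subspace'' $\bar{\mathcal{C}}_z$ and in what precise sense ``modeling the transient dynamics'' forces $\bar{\mathcal{C}}_1$ and $\bar{\mathcal{C}}_2$ to be reachable sets rather than merely controllable subspaces of the individual subsystems: one must argue that the admissible endpoints genuinely exhaust $\bar{\mathcal{C}}_1$ and $\bar{\mathcal{C}}_2$ (as opposed to some proper subset), so that the containment cannot be weakened. I would address this by appealing to the definition of ``properly realized'' in Definition \ref{defin-realization-cite} applied with an arbitrary starting state in the first subsystem and an arbitrary reachable target in the second, and by using the fact established after Lemma \ref{L1} that controllable states of the projecting system are precisely the equivalence classes of controllable states of the lifting system. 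The remaining steps — closure of $\bar{\mathcal{C}}_z$ under $\vec{|+}$ and the passage from the two separate inclusions to the single inclusion of the sum — are routine.
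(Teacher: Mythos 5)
Your argument is correct in substance, but it is worth noting that the paper itself supplies no proof of Corollary~\ref{C2}: the corollary is simply asserted to follow ``naturally'' from Lemma~\ref{L1} and Definition~\ref{defin-realization-cite}, and the only thing actually proved is Proposition~\ref{z-theo}, where the inclusion \eqref{equa_trans_dyna} is verified by directly computing $\bar{\mathcal{C}}^*_{z}=\operatorname{span}\{\bar{\mathbf{C}}_1^*\}+\operatorname{span}\{\bar{\mathbf{C}}_2^*\}$ for the specific linear-combination model \eqref{zsystem}. Your contradiction argument therefore supplies the necessity reasoning the paper leaves implicit, and it is consistent with the paper's intent. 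Two small refinements: first, the clause ``one could never reach a target in $\bar{\Sigma}_2$'' is stronger than needed and not literally true for a linear model, since from $\bar{c}_1\notin\bar{\mathcal{C}}_z$ the reachable set $\mathrm{e}^{\hat{A}_z(t-t_0)}\vec{\circ}\,\bar{c}_1\ \vec{|+}\ \bar{\mathcal{C}}_z$ may well meet $\bar{\mathcal{C}}_2$; the clean statement is that the origin $\mathbf{0}\in\bar{\mathcal{C}}_2$ is unreachable, because $\hat{A}_z$-invariance of $\bar{\mathcal{C}}_z$ and invertibility of $\mathrm{e}^{\hat{A}_z(t-t_0)}$ force $\bar{c}_1\in\bar{\mathcal{C}}_z$ whenever the origin is reachable, and the symmetric step (start at $\mathbf{0}\in\bar{\mathcal{C}}_1$, whose reachable set is exactly $\bar{\mathcal{C}}_z$) gives $\bar{\mathcal{C}}_2\subset\bar{\mathcal{C}}_z$. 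Second, as you yourself flag, ``the controllable subspace of \eqref{tsystem}'' is only well defined once $\bar{F}$ is specialized to a linear model on $\Omega$ (as in \eqref{zsystem}), and ``modeling the transient dynamics'' must be read as requiring realization for arbitrary admissible start states in $\bar{\mathcal{C}}_1$ and targets in $\bar{\mathcal{C}}_2$; both points are gaps in the paper's statement rather than in your argument, and your handling of them is reasonable.
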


Next, we will demonstrate that the modelling strategy of the transient dynamics of dimension-varying control systems  proposed by Prof. Cheng \cite{Cheng2019From} satisfies the condition of Corollary \ref{C2}.

Let $n=p\vee q$ be the least common multiple of $p$ and $q$. Then
\eqref{qsystem} can be ``lifted"  to $\mathbb{R}^{n}$ as
\begin{equation}\label{lsystem}
\begin{gathered}
\Sigma_z^{1}: \dot{z}(t)=\mathbf{A_{1}} z(t)+\mathbf{B_{1}} u_{1}(t),  \quad z(t)  \in \mathbb{R}^{n},\\
\Sigma_z^{2}: \dot{z}(t)=\mathbf{A_{2}} z(t)+\mathbf{B_{2}} u_{2}(t),  \quad z(t)  \in \mathbb{R}^{n},
\end{gathered}
\end{equation}
where
$$
\begin{gathered}
\mathbf{A_{1}}=A_{1}\otimes \mathbf{J}_{n/p},
\mathbf{B_{1}}= B_{1}\otimes \mathbf{1}_{n/p},\\
\mathbf{A_{2}}=A_{2}\otimes \mathbf{J}_{n/q}, \mathbf{B_{2}}=B_{2}\otimes \mathbf{1}_{n/q}.
\end{gathered}
$$
Cheng \cite{Cheng2019From} modelled the transient dynamics of system \eqref{lsystem} as a linear combination form  as follows:
\begin{equation}\label{csystem}
\dot{z}(t)=\left[\mu \mathbf{A}_{1}+(1-\mu) \mathbf{A}_{2}\right] z(t)+\mu \mathbf{B}_{1} u_1+(1-\mu) \mathbf{B}_{2} u_{2}.
\end{equation}
where $\mu=\frac{m_{1}}{m_{1}+m_{2}}$, $m_{1}$ and $m_{2}$ are ``formal masses" of the two systems.
For notational convenience, let
$\mathbf{A}^*=\mu \mathbf{A}_1+(1-\mu) \mathbf{A}_2 $, $\mathbf{B}_{1}^*=\mu \mathbf{B}_{1}$ and $\mathbf{B}_{2}^*=1-\mu \mathbf{B}_{2}$.
Then the transient dynamics model \eqref{tsystem} of system \eqref{lsystem} can be specified as
\begin{equation}\label{zsystem}
\dot{\bar{z}}(t)=\hat{\mathbf{A}}^* \vec{\circ} \bar{z}(t)+\bar{\mathbf{B}}_{1}^* u_1+\bar{\mathbf{B}}_{2}^* u_{2}.
\end{equation}

\begin{proposition}\label{z-theo}
With the same notations as Theorem \ref{T1}. Let $\bar{\mathcal{C}}^*_{z}$ be the controllable subspace of the  transient dynamics \eqref{zsystem}. Then for system \eqref{lsystem}
$$\bar{\mathcal{C}}_{1}+\bar{ \mathcal{C}}_{2}\subset \bar{\mathcal{C}}^*_{z}.$$
\end{proposition}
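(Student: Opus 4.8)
The plan is to reduce the claim to a statement purely about controllable subspaces in $\mathbb{R}^n$, and then to use the explicit Kronecker-product structure of $\mathbf{A}_1, \mathbf{A}_2, \mathbf{B}_1, \mathbf{B}_2$. First I would recall, as established in Lemma \ref{L1}, that for any lifting system $\dot z = A z + B u$ the controllable subspace of its projecting system on $\Omega$ is $\overline{\mathcal{C}}$, the image under the bar-map of the Euclidean controllable subspace $\mathcal{C}$. Since the bar-map $\mathcal{W}\mapsto\bar{\mathcal{W}}$ is dimension-preserving and respects sums (by the definition of $|\vec{+}$ on quotient subspaces), the asserted inclusion $\bar{\mathcal{C}}_1 + \bar{\mathcal{C}}_2 \subset \bar{\mathcal{C}}^*_z$ will follow once I show the corresponding inclusion in $\mathbb{R}^n$, namely
\begin{equation}\label{eucl-incl}
\mathcal{C}_1 + \mathcal{C}_2 \subset \mathcal{C}^*_z,
\end{equation}
where $\mathcal{C}_i = \operatorname{span}\{\mathbf{B}_i, \mathbf{A}_i \mathbf{B}_i, \ldots, \mathbf{A}_i^{n-1}\mathbf{B}_i\}$ is the controllable subspace of $\Sigma_z^i$ in \eqref{lsystem}, and $\mathcal{C}^*_z = \operatorname{span}\{\mathbf{B}^*_1, \mathbf{B}^*_2, \mathbf{A}^* \mathbf{B}^*_1, \mathbf{A}^* \mathbf{B}^*_2, \ldots\}$ is the controllable subspace of the lifting \eqref{csystem} of \eqref{zsystem}.

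The core of \eqref{eucl-incl} is the following observation about the block structure. Because $\mathbf{A}_1 = A_1 \otimes \mathbf{J}_{n/p}$ and $\mathbf{B}_1 = B_1 \otimes \mathbf{1}_{n/p}$, the subspace $\mathcal{C}_1$ is contained in $\operatorname{Im}(I_p \otimes \mathbf{1}_{n/p})$, i.e. in the set of vectors that are ``$p$-periodic'' in the appropriate Kronecker sense; likewise $\mathcal{C}_2 \subset \operatorname{Im}(I_q \otimes \mathbf{1}_{n/q})$. I would next check the key commutation/annihilation facts: $\mathbf{A}_2 \mathbf{B}_1 = 0$ is \emph{not} generally true, so instead I would argue via invariance. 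Specifically, I claim $\mathcal{C}_1$ is $\mathbf{A}_2$-invariant and $\mathcal{C}_2$ is $\mathbf{A}_1$-invariant — this follows from the idempotent-projection identities satisfied by $\mathbf{J}_{n/p}$ and $\mathbf{J}_{n/q}$ (e.g. $\mathbf{J}_k \mathbf{1}_k = \mathbf{1}_k$, $\mathbf{J}_k \mathbf{J}_k = \mathbf{J}_k$), which force $\mathbf{A}_2$ to act on $\operatorname{Im}(I_p \otimes \mathbf{1}_{n/p})$ by a matrix conjugate to an honest $p\times p$ action, and similarly for $\mathbf{A}_1$. Granting these invariances, any product $\mathbf{A}^* \cdots \mathbf{A}^* \mathbf{B}^*_1 = \prod(\mu\mathbf{A}_1 + (1-\mu)\mathbf{A}_2)\,\mu\mathbf{B}_1$ expands (by multilinearity) into a sum of terms each of the form (scalar)$\cdot\mathbf{A}_{i_1}\cdots\mathbf{A}_{i_\ell}\mathbf{B}_1$; since $\mathbf{B}_1 \in \mathcal{C}_1$ and $\mathcal{C}_1$ is invariant under both $\mathbf{A}_1$ and $\mathbf{A}_2$, every such term lies in $\mathcal{C}_1$, hence the whole product lies in $\mathcal{C}_1 \subset \mathcal{C}^*_z$ trivially and, conversely, $\mathbf{B}_1 = \mu^{-1}\mathbf{B}^*_1$ together with $\mathbf{A}_1\mathbf{B}_1 = \ldots$ shows $\mathcal{C}_1 \subset \mathcal{C}^*_z$; the symmetric argument gives $\mathcal{C}_2 \subset \mathcal{C}^*_z$. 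Summing yields \eqref{eucl-incl}.

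The step I expect to be the main obstacle is establishing the two invariance claims ($\mathbf{A}_2\mathcal{C}_1 \subset \mathcal{C}_1$ and $\mathbf{A}_1\mathcal{C}_2 \subset \mathcal{C}_2$) cleanly, since $p$ and $q$ need not divide one another and the least-common-multiple bookkeeping with $\mathbf{J}_{n/p}$ versus $\mathbf{J}_{n/q}$ is delicate; one must exhibit the common refinement $n = p\vee q$ explicitly and verify that $(A_2 \otimes \mathbf{J}_{n/q})(v \otimes \mathbf{1}_{n/p})$ again has the form (something)$\otimes \mathbf{1}_{n/p}$, which requires a small lemma on how $\mathbf{J}_{n/q}$ acts on $\mathbf{1}_{n/p}$-structured vectors. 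An alternative, possibly cleaner route — and the fallback if the invariance argument gets unwieldy — is to show directly that $\mathbf{B}^*_1, \mathbf{B}^*_2$ together with the Cayley–Hamilton–truncated powers of $\mathbf{A}^*$ span a subspace containing each $\mathbf{A}_i^k \mathbf{B}_i$: one inverts the Vandermonde-type relation in $\mu$ relating $\{(\mu\mathbf{A}_1+(1-\mu)\mathbf{A}_2)^k\}$ to the individual monomials, but this only works if $\mu$ can be varied, so the invariance argument is the robust one. Either way, once \eqref{eucl-incl} is in hand, passing to $\Omega$ via Lemma \ref{L1} and dimension-preservation of the bar-map is immediate, completing the proof.
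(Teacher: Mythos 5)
Your reduction to the Euclidean inclusion $\mathcal{C}_1+\mathcal{C}_2\subset\mathcal{C}^*_z$ in $\mathbb{R}^n$ is unobjectionable, but the tool you invoke to prove it --- the invariance claims $\mathbf{A}_2\mathcal{C}_1\subset\mathcal{C}_1$ and $\mathbf{A}_1\mathcal{C}_2\subset\mathcal{C}_2$ --- is false, and Example~\ref{E1} of this very paper refutes it. There $p=2$, $q=3$, $n=6$; the lifted controllable subspace of $\Sigma_z^{1}$ is $\operatorname{span}\{\mathbf{B}_1,\mathbf{A}_1\mathbf{B}_1\}=\operatorname{span}\{[0,0,0,1,1,1]^T,[1,1,1,0,0,0]^T\}=\{x\otimes\mathbf{1}_3\mid x\in\mathbb{R}^2\}$, whereas $\mathbf{A}_2\mathbf{B}_1=(A_2\otimes\mathbf{J}_2)(B_1\otimes\mathbf{1}_3)=[1,1,0,0,\tfrac{1}{2},\tfrac{1}{2}]^T=[1,0,\tfrac{1}{2}]^T\otimes\mathbf{1}_2$, which is constant on blocks of length $n/q=2$ rather than $n/p=3$ and lies outside that subspace (the failure persists on $\Omega$, since $[1,0,\tfrac{1}{2}]^T$ is irreducible of dimension $3$ and hence not equivalent to any vector in $\mathbb{R}^2$). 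In general $\mathbf{A}_2=A_2\otimes\mathbf{J}_{n/q}$ maps all of $\mathbb{R}^n$ into $\operatorname{Im}(I_q\otimes\mathbf{1}_{n/q})$, so the ``small lemma'' you anticipate about how $\mathbf{J}_{n/q}$ acts on $\mathbf{1}_{n/p}$-structured vectors comes out the opposite way: it destroys the $p$-block structure rather than preserving it, unless $p$ divides $q$.

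Moreover, even granting the invariance, your multilinear expansion of $(\mathbf{A}^*)^k\mathbf{B}^*_1$ would only place every term in $\mathcal{C}_1$, i.e.\ it proves $\operatorname{span}\{(\mathbf{A}^*)^k\mathbf{B}^*_1\}\subset\mathcal{C}_1$, which is the reverse of what Proposition~\ref{z-theo} asserts. The inclusion actually needed, $\mathbf{A}_1^k\mathbf{B}_1\in\mathcal{C}^*_z$ for $k\geq 1$, is exactly the step you label ``conversely'' and leave as an ellipsis, and your own fallback (varying $\mu$ in a Vandermonde-type relation) is, as you note yourself, unavailable because $\mu$ is a fixed constant. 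For comparison, the paper's proof stays on the quotient space: it writes the solution formula of \eqref{zsystem}, repeats the computation of Lemma~\ref{L1} to obtain $\bar{\mathcal{C}}^*_z=\operatorname{span}\{\bar{\mathbf{B}}^*_1,\overline{\mathbf{A}^*\mathbf{B}^*_1},\ldots\}+\operatorname{span}\{\bar{\mathbf{B}}^*_2,\overline{\mathbf{A}^*\mathbf{B}^*_2},\ldots\}$, and then asserts the inclusions $\bar{\mathcal{C}}_i\subset\operatorname{span}\{\bar{\mathbf{B}}^*_i,\overline{\mathbf{A}^*\mathbf{B}^*_i},\ldots\}$ without further argument. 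So you have correctly located the genuinely delicate point --- powers of $\mathbf{A}^*$ versus powers of $\mathbf{A}_i$ --- but your proposal does not close it, and the invariance route you chose cannot.
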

\begin{proof}
The solution of (\ref{zsystem}) with $\bar{z}(0)=\bar{z}_{0}$ is given by
\begin{equation*}
\bar{z}(t)=
\mathrm{e}^{\hat{\mathbf{A}}^*(t-t_{0})} \vec{\circ} \bar{z}_{0}+\int_{t_{0}}^{t} \mathrm{e}^{\hat{\mathbf{A}}^* (t-\tau)} \vec{\circ}\bar{\mathbf{B}}_{1}^* u_{1}(\tau) \mathrm{d} \tau+\int_{t_{0}}^{t} \mathrm{e}^{\hat{\mathbf{A}}^* (t-\tau)} \vec{\circ}\bar{\mathbf{B}}_{2}^* u_{2}(\tau) \mathrm{d} \tau
\end{equation*}
Similar to the proof of Lemma \ref{L1}, the controllable subspace of \eqref{zsystem} is
$$\begin{aligned}
\bar{\mathcal{C}}^*_{z}&=\operatorname{span}\{[\begin{array}{llll}
\bar{\mathbf{B}}^*_{1} & \overline{\mathbf{A}^*\mathbf{B}^*_{1}} & \cdots & \overline{(\mathbf{A}^*)^{n-1} \mathbf{B}^*_{1}}
\end{array}]\}+\operatorname{span}\{[\begin{array}{llll}
\bar{\mathbf{B}}^*_{2} & \overline{\mathbf{A}^*\mathbf{B}^*_{2}} & \cdots & \overline{(\mathbf{A}^*)^{n-1} \mathbf{B}^*_{2}}
\end{array}]\},
\end{aligned}$$
Briefly,
$$\bar{\mathcal{C}}^*_{z}=\operatorname{span}\{\bar{\mathbf{C}}_{1}^*\}+\operatorname{span}\{\bar{\mathbf{C}}_{2}^*\}.$$
We note that $\bar{\mathcal{C}}_{1}\subset \operatorname{span}\{\bar{\mathbf{C}_{1}^*}\}$ and
$\bar{\mathcal{C}}_{2}\subset \operatorname{span}\{\bar{\mathbf{C}}_{2}\}$.
Thus,
$$\bar{\mathcal{C}}_{1}+\bar{ \mathcal{C}}_{2}\subset \bar{\mathcal{C}}^*_{z}.$$
\end{proof}
\begin{remark}\label{R3}
We note that the transient dynamics model \eqref{csystem} proposed in \cite{Cheng2019From} is represented as a convex combination form. However, from the proof of Proposition \ref{z-theo}, we can relax the restriction of the parameter $\mu$ to a generalized case. In fact, it is enough that to design the transient dynamics model of system  \eqref{lsystem} satisfies condition \eqref{equa_trans_dyna}. For example, we can choose the two parameters $\alpha, \beta\in \mathbb{R^+}$ such that the transient dynamics
\begin{equation}\label{last}
\dot{z}(t)=\left[\alpha \mathbf{A}_{1}+\beta \mathbf{A}_{2}\right] z(t)+\alpha \mathbf{B}_{1} u_1+\beta \mathbf{B}_{2} u_{2},
\end{equation}
satisfies \eqref{equa_trans_dyna}.
\end{remark}

The following numerical example from Cheng \cite{Cheng2019From} is illustrated to verify the condition of Corollary \ref{C2}. In particular, the parameters of the designed transient dynamics model need not satisfy the convex combination constraint, as required in Cheng \cite{Cheng2019From}.
\begin{example}\label{E1}\cite{Cheng2019From}
 Consider a dimension-varying control system, which
has two models $(A_{1},B_{1})$ and $(A_{2},B_{2})$ as
\begin{equation}\label{example}
\begin{aligned}
& \dot{x}(t)=A_{1} x(t)+B_{1} u_{1}(t), \quad x \in \mathbb{R}^{2},\\
& \dot{y}(t)=A_{2} y(t)+B_{2} u_{2}(t), \quad y \in \mathbb{R}^{3},
 \end{aligned}
\end{equation}
where
\end{example}
$$
\begin{array}{ll}
A_{1}=\left[\begin{array}{cccccc}
0 & 1 \\
0 & 0
\end{array}\right],  & B_{1}=\left[\begin{array}{cccccc}
0  \\
1
\end{array}\right], \\
A_{2}=\left[\begin{array}{cccccc}
0 & 0 & 1 \\
0 & 0 & 0 \\
0 & 1 & 0
\end{array}\right], & B_{2}=\left[\begin{array}{cccccc}
0  \\
1 \\
0
\end{array}\right].
\end{array}
$$

Let $\mathcal{C}_{1}$ and $\mathcal{C}_{2}$ be controllable subspaces of $(A_{1}, B_{1})$ and $(A_{2}, B_{2})$, respectively.
Then, we have
$$\mathcal{C}_{1}=\operatorname{span}\left\{\left[\begin{array}{l}0 \\ 1 \\  \end{array}\right],\left[\begin{array}{l}1 \\ 0 \end{array}\right]\right\}, \mathcal{C}_{2}=\operatorname{span}\left\{\left[\begin{array}{l}0 \\ 1 \\ 0\end{array}\right],\left[\begin{array}{l}1 \\ 0 \\ 0 \end{array}\right],\left[\begin{array}{l}0 \\ 0 \\ 1\end{array}\right]\right\}.$$
According to \eqref{last}, let $\alpha=\frac{3}{2}, \beta=\frac{1}{2}$. The transient dynamics of (\ref{example}) is calculated as follows
\begin{equation*}\label{CZ1}
 \dot{z}(t)=\left[\begin{array}{cccccc}
0 & 0 & 0 & 1/2 & 3/4 & 3/4 \\
0 & 0 & 0 & 1/2 & 3/4 & 3/4 \\
0 & 0 & 0 & 1/2 & 1/2 & 1/2 \\
0 & 0 & 0 & 0 & 0 & 0 \\
0 & 0 & 1/4& 1/4 & 0 & 0 \\
0 & 0 & 1/4 & 1/4 & 0 & 0
\end{array}\right] z(t)+\left[\begin{array}{l}
0 \\
0 \\
0 \\
3/2 \\
3/2 \\
3/2
\end{array}\right] u_{1}(t)+\left[\begin{array}{l}
0 \\
0 \\
1/2 \\
1/2 \\
0 \\
0
\end{array}\right] u_{2}(t), \quad z \in \mathbb{R}^{6}.
\end{equation*}
Its controllability matrix is
$$C_{z_{1}}=\left[
    \begin{array}{cccccccccccc}
      0 & 3 & 9/16 & 27/32 & 9/64 & 27/128 & 0 & 1/4 & 3/8 & 3/32 & 3/32 & 3/128\\
       0 & 3 & 9/16 & 27/32 & 9/64 & 27/128 & 0 & 1/4 & 3/8 & 3/32 & 3/32 & 3/128\\
      0 & 4/9 & 3/8 &9/16 & 3/32 & 9/64 & 1/2 & 1/4 & 1/4 & 1/16& 1/16 & 1/64 \\
      3/2 & 0 & 0 & 0 & 0 & 0 & 1/2 & 0 & 0 & 0 & 0 & 0 \\
      3/2 &  3/8 &  9/16 &  3/32 &  9/64 &  27/128 & 0 &  1/4 &  3/8 &  3/32 &  3/32 &  3/128 \\
      3/2 &  3/8 &  9/16 &  3/32 &  9/64 &  27/128 & 0 &  1/4 &  3/8 &  3/32 &  3/32 &  3/128 \\
    \end{array}
  \right].$$
Then, we obtain the controllable subspace of (\ref{example}) is
$$\mathcal{C}_{z_{1}}=\operatorname{span}\left\{\left[\begin{array}{l}0 \\ 0 \\ 0 \\ 3/2 \\ 3/2 \\ 3/2\end{array}\right],\left[\begin{array}{l}3 \\ 3 \\ 9/4 \\ 0 \\ 3/8 \\ 3/8\end{array}\right],\left[\begin{array}{l}0 \\ 0 \\ 1/2 \\ 1/2 \\ 0 \\ 0\end{array}\right],\left[\begin{array}{l}1/4 \\ 1/4 \\ 1/4 \\ 0 \\ 1/4 \\ 1/4\end{array}\right]\right\}.$$
Thus, we can verify that
$$\begin{aligned}
\bar{\mathcal{C}}_{1}+\bar{\mathcal{C}}_{2}
=&\operatorname{span}\left\{\overline{\left[\begin{array}{l}0 \\1 \\ \end{array}\right]}, \overline{\left[\begin{array}{l}1\\ 0\end{array}\right]}\right\}+
\operatorname{span}\left\{\overline{\left[\begin{array}{l}0 \\ 1 \\ 0 \\ \end{array}\right]}, \overline{\left[\begin{array}{l}1\\ 0 \\ 0\end{array}\right]}, \overline{\left[\begin{array}{l}0 \\0 \\ 1\end{array}\right]}\right\}\\
=&\operatorname{span}\left\{\overline{\left[\begin{array}{l}0 \\0 \\ 0\\ 1 \\ 1 \\ 1 \\ \end{array}\right]}, \overline{\left[\begin{array}{l}1\\ 1 \\ 1 \\0 \\0 \\ 0\end{array}\right]}\right\}+
\operatorname{span}\left\{\overline{\left[\begin{array}{l}0 \\0 \\ 1\\ 1 \\ 0 \\ 0 \\ \end{array}\right]}, \overline{\left[\begin{array}{l}1\\ 1 \\ 0 \\0 \\0 \\ 0\end{array}\right]}, \overline{\left[\begin{array}{l}0 \\0 \\0 \\ 0 \\ 1\\ 1\end{array}\right]}\right\}
\subseteq\bar{\mathcal{C}}_{z_{1}}.
\end{aligned}$$

In addition, it is easy to see that there exists $\mathcal{C}_{2^*}=\operatorname{span}\left\{\left[\begin{array}{lll}0 & 0  & 1\end{array}\right]^{T}\right\}\subseteq\mathcal{C}_{2}$ such that
$$\begin{aligned} \mathcal{C}_{1}\vec{\oplus}\mathcal{C}_{2^{*}}=&\operatorname{span}\left\{\left[\begin{array}{l}0 \\ 1  \\ \end{array}\right],\left[\begin{array}{l}1 \\  0\end{array}\right]\right\}\vec{\oplus}\operatorname{span}\left\{\left[\begin{array}{l}0 \\ 0\\ 1 \\ \end{array}\right]\right\}\\
=&\operatorname{span}\left\{\left[\begin{array}{l}0 \\ 1 \\0 \\ \end{array}\right],\left[\begin{array}{l}1 \\  0\\ 0\end{array}\right]\right\}+\operatorname{span}\left\{\left[\begin{array}{l}0 \\ 0\\ 1 \\ \end{array}\right]\right\}
=\mathbb{R}_{3}.
\end{aligned}$$
By Corollary \ref{C1}, the dimension-varying control system \eqref{E1} can realize the transient dynamics.

\section{Conclusion}\label{S6}
The realization problem of the transient dynamics of dimension-varying control systems has been investigated in this work.
By analyzing the controllable subspaces of linear systems on quotient space, a condition for the realization of the transient process for dimension-varying control systems has been given, based on which a new strategy for modelling the transient dynamics of dimension-varying control systems on quotient space is presented. As a result, theoretically we prove that the dynamic evolution between systems of different dimensions can be implemented by designing the control of  the transient dynamics model on quotient space. The correctness of theoretical results is verified by a numerical example.

\section*{References}
\bibliographystyle{elsarticle-num-names}
\bibliography{mybibfile}

\end{document}